\pgfplotsset{compat=newest}
\newtheorem{definition}{\bfseries Definition}
\newtheorem{proposition}{\bfseries Proposition}
\newtheorem{theorem}{\bfseries Theorem}
\newtheorem{corollary}{\bfseries Corollary}
\newtheorem{lemma}{\bfseries Lemma}
\newtheorem{remark}{\bfseries Remark}
\renewcommand{\epsilon}{\varepsilon}
\newcommand{\ubar}{{\bf u}}
\newcommand{\ubold}{{\bf u}}
\newcommand{\vbold}{{\bf v}}
\newcommand{\xbar}{x}
\newcommand{\R}{\mathbb{R}}
\newcommand{\ep}{\epsilon}
\newcommand{\C}{\mathcal{C}}
\newcommand{\D}{\mathcal{D}}
\newcommand{\IntC}{\mathrm{Int}(\mathcal{C})}
\newcommand{\Kinfinity}{\mathcal{K}}
\newcommand{\KLinfinity}{\mathcal{KL}}
\newcommand{\Kclf}{K_{\mathrm{clf}}}
\newcommand{\Krcbf}{K_{\mathrm{rcbf}}}
\newcommand{\Kzcbf}{K_{\mathrm{zcbf}}}
\newcommand{\Fr}{F_r}
\newcommand{\gap}{\vspace{0.2cm}}
\newcommand{\newsec}[1]{\gap \noindent {\bf #1}}
\begin{document}
\title{Control Barrier Function Based Quadratic Programs for Safety Critical Systems}
\author{     Aaron D. Ames,
	         Xiangru Xu,
Jessy W. Grizzle,  
Paulo Tabuada
	         \thanks{This research is supported by NSF CPS Awards 1239055, 1239037 and 1239085.}
\thanks{A. D. Ames is with the Dept. of Mechanical and Civil Engineering, California Institute of Technology, Pasadena CA, ames@caltech.edu. X. Xu and J. W. Grizzle are with the Dept. of Electrical Engineering and Computer Science, University of Michigan, Ann Arbor, MI,  \{xuxiangr, grizzle\}@umich.edu. P. Tabuada is with the Dept. of Electrical Engineering, University of California at Los Angles, Los Angles, CA,  tabuada@ucla.edu.}
          }


\maketitle

 \begin{abstract}
 Safety critical systems involve the tight coupling between potentially conflicting control objectives and safety constraints.
 As a means of creating a formal framework for controlling systems of this form, and with a view toward automotive applications,
 this paper develops a methodology that allows safety conditions---expressed as \emph{control barrier functions}---to be unified with performance objectives---expressed as control Lyapunov functions---in the context of real-time optimization-based controllers.
Safety conditions are specified in terms of forward invariance of a set, and are verified via two novel generalizations of barrier functions; in each case, the existence of a barrier function satisfying Lyapunov-like conditions implies forward invariance of the set, and the relationship between these two classes of barrier functions is characterized. In addition, each of these formulations yields a notion of control barrier function (CBF), providing inequality constraints in the control input that, when satisfied, again imply forward invariance of the set. Through these constructions, CBFs can naturally be unified with control Lyapunov functions (CLFs) in the context of a quadratic program (QP); this allows for the achievement of control objectives (represented by CLFs) subject to conditions on the admissible states of the system (represented by CBFs).
 The mediation of safety and performance through a QP is demonstrated on adaptive cruise control and lane keeping, two automotive control problems that present both safety and performance considerations coupled with actuator bounds.
  \end{abstract}

\begin{IEEEkeywords}
Control Lyapunov function, Barrier function, Nonlinear control, Quadratic program, Safety, Set invariance
\end{IEEEkeywords}

\section{Introduction}
\label{sec:introduction}


%

Cyber-physical systems have at their core tight coupling between computation, control and physical behavior.
One of the difficulties in designing cyber-physical systems is the need to meet a large and diverse set of objectives by properly designing controllers. While it is tempting to decompose the problem into the design of a controller for each individual objective and then integrate the resulting controllers via software, the integration problem is far from being a simple one. Examples abound in, e.g., robotic and automotive systems, of unexpected and unintended interactions between controllers resulting in catastrophic behavior. In this paper we address a specific instance of this problem: how to synthesize a controller enforcing the different, and occasionally conflicting, objectives of safety and performance/stability. The overarching objective of this paper is to develop a methodology to design controllers enforcing safety objectives expressed in terms of invariance of a given set, and performance/stability objectives, expressed as the asymptotic stabilization of another given set.


Motivated by the use of Lyapunov functions to certify stability properties of a set without calculating the exact solution of a system, the underlying concept in this paper is to use barrier functions to certify forward invariance of a set, while avoiding the difficult task of computing the system's reachable set. Prior work in \cite{Romd2014CDCunitiing} incorporates into a single feedback law the conditions required to simultaneously achieve asymptotic stability of an equilibrium point, while avoiding an unsafe set. Importantly, if the stabilization and safety objectives are in conflict, then no feedback law can be proposed. In contrast, the approach developed here will pose a feedback design problem that \textit{mediates} the safety and stabilization requirements, in the sense that safety is always guaranteed, and progress toward the stabilization objective is assured when the two requirements ``are not in conflict'' \cite{aaroncbfcdc14}. The essential differences in these approaches will be highlighted
through a realistic example.

\subsection{\color{black}{Background}}
\label{sec:Intro:background}
\textcolor{black}{Barrier functions were first utilized in optimization; see Chapter 3 of~\cite{FGW02} for an historical account of their use in optimization. More recently, barrier functions were used in the paper~\cite{MCP16} to develop an interior penalty method for converting constrained optimal control methods into unconstrained ones\footnote{Although the techniques employed are different from ours, there are conceptual similarities as can be seen by noticing the similarity between \eqref{eqn:superlevelsetC}-\eqref{eqn:superlevelsetC3} defined later in our paper and the inequalities appearing in Proposition 4, item (g), in~\cite{MCP16} characterizing membership to the set used to define a Gauge function.}.} Barrier functions are now common throughout the control and verification literature due to their natural relationship with Lyapunov-like functions \cite{Tee,wieland2007constructive}, their ability to establish safety, avoidance, or eventuality properties \cite{aubin2009viability,Sloth2012Composite,wisniewskiconverse,Prajna2007siam,prajna2007framework}, and their relationship to multi-objective control \cite{PSV:CDC:2013}. Two notions of a barrier function associated with a set $\C$ are commonly utilized: one that is unbounded on the set boundary, i.e., $B(x) \to \infty$ as $x \to \partial \C$, termed a \textit{reciprocal barrier function} here, and one that vanishes on the set boundary, $h(x) \to 0$ as $x \to \partial \C$, called a \textit{zeroing barrier function} here. In each case, if $B$ or $h$ satisfy Lyapunov-like conditions, then forward invariance of $\C$ is guaranteed.
The natural extension of a barrier function to a system with control inputs is a Control Barrier Function (CBF), first proposed by  \cite{wieland2007constructive}. In many ways, CBFs parallel the extension of Lyapunov functions to Control Lyapunov functions (CLFs), as pioneered in \cite{Sontag:firstCLF,artstein1983stabilization,Sontag:universal} and studied in depth in \cite{FK:Book}. In each case, the key point is to impose inequality constraints on the derivative of a candidate CBF (resp., CLF) to establish entire classes of controllers that render a given set forward invariant (resp., stable).



The Lyapunov-like conditions that define a (control) barrier function are intrinsically coupled to the class of controllers that achieve forward invariance of a set $\C$.
As emphasized in \cite{BarrierRevisited} and \cite{KongExpoBarrier13}, it is therefore essential to consider how one defines the evolution of a barrier function away from the set boundary, as this will translate directly to conditions imposed on a CBF.  In the case of reciprocal barrier functions, existing formulations impose invariant level sets of $B$ \cite{Tee}, via, $\dot{B} \leq 0$, as was done in earlier work on zeroing barrier functions (or barrier certificates) \cite{prajna2007framework} via $\dot{h} \geq 0$; yet, in both cases, these conditions are too restrictive on the interior of $\C$.

\subsection{ \textcolor{black}{Contributions} }
\label{sec:Intro:Contributions}

\textcolor{black}{The first contribution of this paper is to formulate conditions on the derivative of a (reciprocal or zeroing) barrier function that are minimally restrictive on the interior of $\C$. These conditions will be formulated with an eye toward their extension to control barrier functions. It is clear that less restrictive conditions for a barrier function will translate into a control barrier function that admits a larger set of inputs compatible with controlled invariance; this will be important when integrating performance with safety later in the paper. Less obvious considerations include robustness of a controlled invariant set to model perturbations, Lipschitz continuity of feedbacks achieving controlled invariance, and, as pointed out by \cite{Prajna2007siam} for barrier certificates, convexity of the set of control barrier functions when computing them numerically. }

\textcolor{black}{For reciprocal barrier functions, we allow for $B$ to grow when it is far away from the boundary of $\C$ in that we only require that $\dot{B} \leq \alpha(1/B)$, for a class-$\mathcal{K}$ function $\alpha$.  In the case of zeroing barrier functions, we adopt a condition of the form $\dot{h} \geq - \alpha(h)$. The latter condition may be somewhat surprising in view of the well-known Nagumo's Theorem, which states that for a system without inputs and a $C^1$ function $h$, the condition $\dot{h} \geq 0$ on $\partial \C$ is  necessary and sufficient for the zero superlevel set to be invariant. Importantly, under mild conditions on $\C$, it is demonstrated that the conditions we propose are also necessary and sufficient for forward invariance, and result in the relationships shown in Fig.~\ref{figbarrier2}. Moreover,  it is shown how our conditions lead to Lipschitz continuity of control laws, robustness, and convexity of the class of control barrier functions.
}

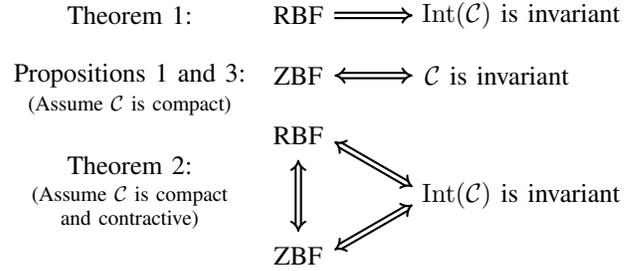
\begin{figure}[!bth]
	\centering
	\begin{tikzpicture}[scale=0.8]
	\node at (-6.5,0.5) {Theorem \ref{thm:main}:};
	\node at (-6.5,-0.5) {Propositions \ref{thm:GBF} and \ref{propnecezbf}:};
	\node at (-6.5,-1) {\footnotesize{(Assume $\C$ is compact)}};
	\node at (0,0.5) {$\mathrm{Int}(\mathcal{C})$ is invariant};
	\node at (-0.4,-0.5) {$\C$ is invariant};
	\node at (-3.7,0.5) {RBF};
	\node at (-3.7,-0.5) {ZBF};
	\draw[-implies,double equal sign distance,style=thick](-3.1,0.5)--(-1.8,0.5);
	\draw[implies-implies,double equal sign distance,style=thick](-3.1,-0.5)--(-1.8,-0.5);

	\node at (-6.5,-2) {Theorem \ref{thm:CBFiff}:};
	\node at (-6.5,-2.5) {\footnotesize{(Assume $\C$ is compact }};
	\node at (-6.5,-2.9) {\footnotesize{and contractive)}};
	\node at (0,-2.5) {$\mathrm{Int}(\mathcal{C})$ is invariant};
	\node at (-3.7,-1.5) {RBF};
	\node at (-3.7,-3.5) {ZBF};
	\draw[implies-implies,double equal sign distance,style=thick](-3.1,-1.6)--(-1.8,-2.35);
	\draw[implies-implies,double equal sign distance,style=thick](-3.1,-3.4)--(-1.8,-2.65);
	\draw[implies-implies,double equal sign distance,style=thick](-3.7,-1.9)--(-3.7,-3.1);
	\end{tikzpicture}
	\caption{\textcolor{black}{Relationships among reciprocal barrier functions (RBFs), zeroing barrier functions (ZBFs), and forward invariance that are developed in the paper. The underlying analysis can be found in Theorem \ref{thm:main}, Proposition \ref{thm:GBF}, Proposition \ref{propnecezbf} and Theorem \ref{thm:CBFiff}. The relations established for barrier functions then extend to control barrier functions.}
	}\label{figbarrier2}
\end{figure}


Safety-critical control problems often include performance objectives, such as stabilization to a point or a surface, in addition to safety constraints.
An important novelty of the present paper is that a Quadratic Program (QP) is used to ``mediate'' these (potentially conflicting) specifications: stability and safety.
The motivation for this solution comes from \cite{AmGaGr:CDC12,AmGaGrSr:TAC12,YouTubeExp1}, which developed CLFs to exponentially stabilize periodic orbits in a class of hybrid systems. The experimental realization of CLF inspired controllers on a bipedal robot resulted in the observation that, since CLF conditions are affine in torque, they can be formulated as QPs \cite{Kevin2015torque}. Moreover, this perspective allows for the consideration of multiple control objectives (expressed via multiple CLFs) together with force- and torque-based constraints \cite{AMPO13,morris2015continuity}.  The present paper extends these ideas by unifying CBFs and CLFs through QPs.  In particular, given a control objective (expressed through a CLF) and an admissible set in the state space (expressed via a CBF), we formulate a QP that mediates the tradeoff of achieving a stabilization objective subject to ensuring the system remains in a safe set. In particular, relaxation is used to make the stability objective a soft constraint on the QP, while safety is maintained as a hard constraint. In this way, safety and stability do not need to be simultaneously satisfiable, and continuity of the resulting control law is provably maintained.

\textcolor{black}{An alternative approach to controlled invariance has been developed in \cite{mareczek2002invariance,wolff2005invariance,kimmel2014invariance,lee2012rollover,kimmel2015active}  
	under the name of \textit{invariance control}. This elegant body of work is based on an extension of Nagumo's condition to functions $h$ of higher relative degree \cite{IsidoriNonControl95}, namely, it focuses on derivative conditions on the boundary of the controlled-invariant set.  As a consequence,  the control law is discontinuous, such as in sliding mode control, and as in sliding mode control, chattering may occur. We, however, establish a control framework that yields checkable conditions for Lipschitz continuous control laws and well-defined solutions of the closed-loop system. This is important from a theoretical point of view as well as the practical benefit of avoiding chattering. The consideration of the existence of solutions to the closed-loop system is one of the important differences between barrier certificates for dynamical systems and control barrier functions for control systems.}

The CBF-CLF-based QPs are illustrated on two automotive safety/convenience problems; namely,  Adaptive Cruise Control (ACC) and Lane Keeping (LK) \cite{vahidi2003research,li2011model,naus2010design,ioannou1993autonomous}.
ACC is being developed and deployed on passenger vehicles due to its promise to enhance driver convenience, safety, traffic flow, and fuel economy \cite{liang1999optimal,liang2000string,van2006impact}.  It is a multifaceted control problem because it involves asymptotic performance objectives (drive at a desired speed), subject to safety constraints (maintain a safe distance from the car in front of you), and constraints based on the physical characteristics of the car and road surface (bounded acceleration and deceleration). A key challenge is that the various objectives can often be in conflict, such as when the desired cruising speed is faster than the speed of the leading car, while provably satisfying the safety-oriented constraints is of paramount importance. Lane keeping, maintaining a vehicle between the lane markers \cite{GerdLanePotential06}, is another safety-related problem that we use to illustrate the methods developed in this paper.

A preliminary version of this work was presented in the conference publications \cite{aaroncbfcdc14} and \cite{Xu2015ADHS}. The present paper adds to those two papers in the following important ways: the relations between the two forms of barrier functions are characterized;
barriers with a higher relative degree are considered; the adaptive cruise control problem is extended from the lead vehicle's speed being constant to the more realistic case of varying speed with bounded input force; and the lane keeping problem is considered under the proposed QP framework.

\subsection{ \textcolor{black}{Organization and Notation} }
\label{sec:Intro:OrganizationNotatiion}
The remainder of the paper is organized as follows. Two barrier functions, specifically, reciprocal barrier functions and zeroing barrier functions, are formulated in Sect. \ref{sec:controlbarrier}, and are extended to control barrier functions in Sect. \ref{sec:controlbarrierExtensions}.  Quadratic programs that unify control Lyapunov functions and control barrier functions are introduced in section \ref{sec:lyapunovfunction}. The theory developed in the paper is illustrated on the adaptive cruise control and lane keeping problems in Sect. \ref{sec:accviaqp}, with simulations reported in Sect. \ref{sec:simulation}. Conclusions are provided in Sect. \ref{sec:conclusions}.

\vspace{.3cm}

\emph{Notation:} $\R,\R_0^+$ denote the set of real, non-negative real numbers, respectively. $\mathrm{Int}(\C)$ and $\partial\C $ denote the interior and boundary of the set $\C$, respectively. The open ball in $\R^n$ with radius $\varepsilon\in \mathbb{R}^+$ and center at $0$ is denoted by $B_\varepsilon=\{x\in \mathbb{R}^n\,\vert\, \Vert x\Vert< \varepsilon\}$. The Minkowsky sum of two sets $\mathcal{R} \subseteq \mathbb{R}^n$ and $\mathcal{S} \subseteq \mathbb{R}^n$ is denoted by $\mathcal{R}\oplus\mathcal{S}$. The distance from $x$ to a set $\mathcal{S}$ is denoted by \mbox{$\Vert x\Vert_\mathcal{S}=\inf_{s\in \mathcal{S}}\Vert x-s\Vert$}.
For any essentially bounded function $g:\mathbb{R}\to \mathbb{R}^n$,  the infinity norm of $g$ is denoted by $\Vert g\Vert_{\infty}=\textrm{ess}\sup_{t\in \mathbb{R}}\Vert g(t)\Vert$. A continuous function \mbox{$\beta_1:[0,a)\rightarrow [0,\infty)$} for some $a>0$ is said to belong to \emph{class $\mathcal{K}$} if it is strictly increasing and $\beta_1(0)=0$.
A continuous function \mbox{$\beta_2:[0,b)\times [0,\infty)\rightarrow [0,\infty)$} for some $b>0$ is said to belong to \emph{class $\mathcal{KL}$}, if for each fixed $s$, the mapping $\beta_2(r,s)$ belongs to class $\mathcal{K}$ with respect to $r$ and for each fixed $r$, the mapping $\beta_2(r,s)$ is decreasing with  respect to $s$ and $\beta_2(r,s)\rightarrow 0$ as $s\rightarrow \infty$.

\section{Reciprocal and Zeroing Barrier Functions}
\label{sec:controlbarrier}
This section studies two notions of barrier functions and investigates their relationships with forward invariance of a set. Consider a nonlinear system of the form
\begin{eqnarray}
	\label{eqn:dynamicalsystem}
	\dot{x} = f(x)
\end{eqnarray}
where $x \in \R^n$ and $f$ is assumed to be locally Lipschitz.   Then for any initial condition $x_0:=x(t_0) \in \R^n$, there exists a maximum time interval $I(x_0) = [t_0,\tau_{\mathrm{max}})$ such that $x(t)$ is the unique solution to \eqref{eqn:dynamicalsystem} on $I(x_0)$; in the case when $f$ is forward complete, $\tau_{\mathrm{max}} = \infty$.  A set $\mathcal{S}$ is called {\it (forward) invariant} with respect to \eqref{eqn:dynamicalsystem} if for every $x_0 \in \mathcal{S}$, $x(t) \in \mathcal{S}$ for all $t \in I(x_0)$.


\subsection{Reciprocal Barrier Functions}
\subsubsection{Motivation}
\label{sec:motivation}

Given a closed set $\C \subset \R^n$, we determine conditions on functions $B : \mathrm{Int}({\C}) \to \R$ such that $\mathrm{Int}({\C})$ is forward invariant.
These conditions will motivate the formulation of the barrier functions considered in this paper.


Assume that the set $C$ is defined as
\begin{align}
	\label{eqn:superlevelsetC}
	\C &= \{ x \in \R^n : h(x) \geq 0\}, \\
	\label{eqn:superlevelsetC2}
	\partial \C &= \{ x \in \R^n : h(x) = 0\}, \\
	\label{eqn:superlevelsetC3}
	\mathrm{Int}(\C) &= \{ x \in \R^n : h(x) > 0\},
\end{align}
where \mbox{$h: \R^n \to \R$} is a continuously differentiable function. Later, it will also be assumed that $\C$ is nonempty and has no isolated point, that is,
\begin{equation}
	\label{eqn:closurePropertyC}
	\mathrm{Int}({\C}) \not= \emptyset ~\text{and}~~\overline{\mathrm{Int}(\mathcal{C})} = \mathcal{C}.
\end{equation}

Motivated by the barrier method in optimization \cite{boyd2004convex}, consider the logarithmic barrier function candidate
\begin{eqnarray}
	\label{eqn:Bmotivation}
	B(x) = - \log\left( \frac{h(x)}{1 + h(x)} \right).
\end{eqnarray}
Note that this function satisfies the important properties
\begin{eqnarray}
	\inf_{x \in \mathrm{Int}(\mathcal{C})}  B(x)  \geq  0 , \qquad
	\lim_{x \to \partial\mathcal{C}}  B(x)   =  \infty.   \label{barrierproperty}
\end{eqnarray}

The question then becomes: {\it what conditions should be imposed on $\dot{B}$ so that $\mathrm{Int}(\mathcal{C})$ is forward invariant? } The conventional answer in \cite{Tee,prajna2007framework} has been to enforce the condition $\dot{B} \leq 0$, but this may not be desirable since it requires all sublevel sets of $\C$ to be invariant;  in particular, it will not allow a solution to leave a sublevel set even if by doing so it will remain in $\IntC$.  A condition analogous to this was relaxed by \cite{KongExpoBarrier13} and \cite{BarrierRevisited} where the key idea was to only require a single sublevel set to be invariant. Motivated by this, we relax the condition $\dot{B} \leq 0$ to
\begin{eqnarray}
	\label{eqn:barriercond}
	\dot{B} \leq \frac{\gamma}{B},
\end{eqnarray}
where $\gamma$ is positive. This inequality allows for $\dot{B}$ to grow when solutions are far from the boundary of $\C$.  As solutions approach the boundary, the rate of growth decreases to zero.

For \eqref{eqn:barriercond} to be an acceptable condition, we need to verify that its satisfaction guarantees that solutions to \eqref{eqn:dynamicalsystem} stay in $\mathrm{Int}(\mathcal{C})$.  To see this, we note that differentiating \eqref{eqn:Bmotivation} along solutions of \eqref{eqn:dynamicalsystem} gives
$$
\dot{B} = - \frac{\dot{h}}{h + h^2}.
$$
Therefore, \eqref{eqn:barriercond} implies that the rate of change in $h$ with respect to $t$ is bounded by
$$
\dot{h} \geq  \frac{ \gamma ( h + h^2)}{\log\left(\frac{h}{1 + h}\right)}.
$$
Assuming for the moment that solutions $x(t,x_0)$ of \eqref{eqn:dynamicalsystem} are forward complete, the Comparison Lemma \cite{KHALIL01} implies that
$$
h(x(t,x_0)) \geq \frac{1}{-1 + \exp\left({\sqrt{ 2 \gamma t + \log^2\left(\frac{h(x_0) + 1}{h(x_0)}\right) }}\right)}.
$$
Therefore, if $h(x_0) > 0$, i.e., $x_0 \in \IntC$, then condition \eqref{eqn:barriercond} guarantees that \mbox{$h(x(t,x_0)) > 0$} for all $t \geq 0$, i.e., \mbox{$x(t,x_0) \in \mathrm{Int}(\C)$} for all $t \geq 0$.

Apart from \eqref{eqn:Bmotivation}, another barrier function that is commonly considered in optimization is the inverse-type barrier candidate
\begin{eqnarray}
	\label{eqn:barrierfunc2}
	B(x) =  \frac{1}{h(x)}.
\end{eqnarray}
Note that $B(x)$ in \eqref{eqn:barrierfunc2} also satisfies the properties in \eqref{barrierproperty}.
If condition \eqref{eqn:barriercond} holds, then by the Comparison Lemma, we have
$$
h(x(t,x_0))\geq \frac{1}{\sqrt{2\gamma t+\frac{1}{h^2(x_0)}}},
$$
and once again, \mbox{$x(t,x_0) \in \mathrm{Int}(\C)$} for all $t \geq 0$, provided that $x_0 \in \mathrm{Int}(C)$.

\subsubsection{Reciprocal Barrier Functions and Set Invariance}

Based on the presented motivation, we formulate a notion of barrier function that provides the same guarantees in a more general context.


\begin{definition}
	\label{def:barrierfunctions}
	For the dynamical system \eqref{eqn:dynamicalsystem}, a continuously differentiable function \mbox{$B : \mathrm{Int}(\mathcal{C})  \to \R$} is a \emph{reciprocal barrier function (RBF)} for the set $\C$ defined by \eqref{eqn:superlevelsetC}-\eqref{eqn:superlevelsetC3} for a continuously differentiable function $h: \R^n \to \R$, if there exist class $\Kinfinity$ functions $\alpha_1,\alpha_2,\alpha_3$ such that, for all $x \in \mathrm{Int}(\mathcal{C}) $,
	\begin{align}
		\label{eqn:Binequality}
		\frac{1}{\alpha_1(h(x))}   \leq   B(x)  &  \leq \frac{1}{\alpha_2(h(x) ) },\\
		\label{eqn:Bdotinequality}
		L_f{B}(x) &   \leq \alpha_3(h(x)).
	\end{align}
	
\end{definition}

\begin{remark}
	The Lyapunov-like bounds \eqref{eqn:Binequality} on $B$ imply that along solutions of \eqref{eqn:dynamicalsystem}, $B$ essentially behaves like $\frac{1}{\alpha(h)}$ for some class $\Kinfinity$ function $\alpha$ with
	\begin{eqnarray}
		\inf_{x \in \mathrm{Int}(\mathcal{C})} \frac{1}{\alpha(h(x) )}   \geq  0 , \quad
		\lim_{x \to \partial\mathcal{C}} \frac{1}{\alpha(h(x))}    =  \infty.   \nonumber
	\end{eqnarray}
	The condition \eqref{eqn:Bdotinequality} on $\dot{B}=L_fB$, which generalizes condition \eqref{eqn:barriercond}, allows for $B$ to grow quickly when solutions are far away from $\partial \C$, with the growth rate approaching zero as solutions approach $\partial \C$.
\end{remark}

\begin{remark} In the conference version \cite{aaroncbfcdc14}, a function satisfying Def. \ref{def:barrierfunctions} was  simply called a \emph{barrier function} and not a \emph{reciprocal barrier function}. The new terminology is necessary to make the distinction with a second type of barrier function used in the next subsection.
\end{remark}


\begin{theorem}
	\label{thm:main}
	Given a set $\C \subset \R^n$ defined by \eqref{eqn:superlevelsetC}-\eqref{eqn:superlevelsetC3} for a continuously differentiable function $h$, if there exists a RBF $B : \mathrm{Int}(\mathcal{C}) \to \R$, then $\mathrm{Int}(\mathcal{C})$ is forward invariant.
\end{theorem}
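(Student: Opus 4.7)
The plan is to show that $B$ cannot blow up along the trajectory in finite time, which, combined with the lower bound in \eqref{eqn:Binequality}, prevents the trajectory from approaching $\partial \mathcal{C}$. The argument is a Comparison-Lemma argument that generalizes exactly what was done in the motivating examples in Sect.~\ref{sec:motivation}.

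First, I would couple the two inequalities in Definition~\ref{def:barrierfunctions}. From $B(x)\leq 1/\alpha_2(h(x))$ and the monotonicity of $\alpha_2 \in \Kinfinity$, we get $h(x)\leq \alpha_2^{-1}(1/B(x))$, and since $\alpha_3$ is also increasing,
\begin{equation*}
L_f B(x) \;\leq\; \alpha_3(h(x)) \;\leq\; \alpha_3\!\left(\alpha_2^{-1}\!\left(\tfrac{1}{B(x)}\right)\right) \;=:\; \alpha\!\left(\tfrac{1}{B(x)}\right),
\end{equation*}
where $\alpha:=\alpha_3\circ\alpha_2^{-1}$ is a class $\mathcal{K}$ function. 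Thus along any solution that remains in $\mathrm{Int}(\mathcal{C})$, $\dot B \leq \alpha(1/B)$.

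Next, I would apply the Comparison Lemma. Fix $x_0\in\mathrm{Int}(\mathcal{C})$ and let $y(t)$ solve $\dot y = \alpha(1/y)$ with $y(t_0)=B(x_0)>0$. As long as the trajectory remains in $\mathrm{Int}(\mathcal{C})$, $B(x(t))\leq y(t)$. Since $y\mapsto \alpha(1/y)$ is decreasing in $y$ and $y(t)\geq y(t_0)$ whenever it is defined (because $\dot y>0$), we obtain the linear upper bound $y(t)\leq y(t_0) + (t-t_0)\,\alpha(1/y(t_0))$, so $y(t)<\infty$ on every bounded interval. Consequently $B(x(t))$ is finite on every bounded sub-interval of $I(x_0)$ on which $x(t)\in\mathrm{Int}(\mathcal{C})$.

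Finally, I would close the argument by a maximal-interval contradiction. Let
\begin{equation*}
T^{*}:=\sup\{\,t\in I(x_0)\,:\, x(s)\in\mathrm{Int}(\mathcal{C}) \text{ for all } s\in[t_0,t]\,\}.
\end{equation*}
Suppose for contradiction that $T^{*}<\tau_{\max}$. By continuity of $x(\cdot)$ and $h$, we have $x(T^{*})\in\partial\mathcal{C}$, i.e.\ $h(x(t))\to 0$ as $t\uparrow T^{*}$. The lower bound in \eqref{eqn:Binequality} then forces $B(x(t))\geq 1/\alpha_1(h(x(t)))\to\infty$, contradicting $B(x(t))\leq y(t)\leq y(T^{*})<\infty$ on $[t_0,T^{*})$. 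Hence $T^{*}=\tau_{\max}$ and $\mathrm{Int}(\mathcal{C})$ is forward invariant.

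The only real obstacle is the bookkeeping at the boundary: $B$ is defined only on $\mathrm{Int}(\mathcal{C})$, so the Comparison Lemma is applied on the open interval $[t_0,T^{*})$ and one must exclude finite-time blow-up of $y$ and of $B$ before concluding that $x$ cannot reach $\partial\mathcal{C}$. The non-blow-up of $y$ follows from the sublinear growth built into $\alpha(1/y)$, and the rest is the standard maximal-interval argument.
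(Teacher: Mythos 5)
Your proof is correct and follows essentially the same route as the paper's: combine the two inequalities of Definition~\ref{def:barrierfunctions} into $\dot B \le \alpha(1/B)$ with $\alpha=\alpha_3\circ\alpha_2^{-1}$, apply the Comparison Lemma, and use the left inequality of \eqref{eqn:Binequality} to conclude that $h(x(t))$ stays positive. The only real differences are cosmetic: where you rule out finite-time blow-up of the comparison solution via the elementary monotonicity bound $y(t)\le y(t_0)+(t-t_0)\,\alpha(1/y(t_0))$ and then close with an exit-time contradiction, the paper's Lemma~\ref{lem:simplesystem} derives the explicit class-$\KLinfinity$ form $y(t)=1/\sigma(1/y_0,t-t_0)$ and reads off the quantitative positive lower bound \eqref{eqn:maininequality} on $h(x(t))$ directly. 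One point you gloss over that the paper treats explicitly: the right-hand side $y\mapsto\alpha(1/y)$ need not be locally Lipschitz, so the hypothesis of the standard Comparison Lemma (Khalil, Lemma~3.4) fails as stated; the paper observes that Lipschitzness is used only to guarantee uniqueness of the comparison solution, which here follows from Peano's uniqueness theorem because $\alpha(1/y)$ is continuous and non-increasing in $y$. You should add a remark of this kind (or phrase the comparison in terms of the maximal solution) to make that step airtight.
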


The following lemma is established to prove Theorem \ref{thm:main}.


\begin{lemma}\label{lem:simplesystem}
	Consider the dynamical system
	\begin{eqnarray}
		\label{eqn:dotyeqn}
		\dot{y} = \alpha\left( \frac{1}{y} \right), \qquad y(t_0) = y_0,
	\end{eqnarray}
	with $\alpha$ a class $\Kinfinity$ function. For every  \mbox{$y_0 \in (0,\infty)$}, the system has a unique solution defined for all $t \geq t_0$ and given by
	\begin{eqnarray}
		\label{eqn:ytsolutionform}
		y(t) = \frac{1}{\sigma\left( \frac{1}{y_0} , t - t_0 \right)},
	\end{eqnarray}
	where $\sigma$ is a class $\KLinfinity$ function.
\end{lemma}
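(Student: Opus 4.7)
The plan is to reduce \eqref{eqn:dotyeqn} to a standard scalar ODE via the substitution $z=1/y$, solve it by separation of variables, and translate the result back. Differentiating $z=1/y$ along \eqref{eqn:dotyeqn} yields
\begin{equation*}
\dot z = -z^{2}\alpha(z),\qquad z(t_{0}) = 1/y_{0} \in (0,\infty).
\end{equation*}
Since $\alpha$ is class $\mathcal{K}$, the right-hand side is continuous and strictly negative on $(0,\infty)$; any solution is therefore strictly decreasing and remains in $(0,\infty)$, so returning via $y=1/z$ will recover a positive solution of \eqref{eqn:dotyeqn}.

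For each $r>0$, I would define $F_{r}:(0,r]\to[0,\infty)$ by
\begin{equation*}
F_{r}(\rho) = \int_{\rho}^{r}\frac{du}{u^{2}\,\alpha(u)}.
\end{equation*}
Because $u^{2}\alpha(u)$ is continuous and positive on $(0,r]$, $F_{r}$ is continuous and strictly decreasing with $F_{r}(r)=0$. The crucial observation is that $F_{r}(\rho)\to\infty$ as $\rho\to 0^{+}$: for any $\varepsilon\in(0,r]$, monotonicity of $\alpha$ gives $u^{2}\alpha(u)\leq \alpha(\varepsilon)\,u^{2}$ on $(0,\varepsilon]$, so
\begin{equation*}
F_{r}(\rho) \geq \frac{1}{\alpha(\varepsilon)}\left(\frac{1}{\rho}-\frac{1}{\varepsilon}\right) \xrightarrow[\rho\to 0^{+}]{} \infty.
\end{equation*}
Hence $F_{r}$ is a homeomorphism of $(0,r]$ onto $[0,\infty)$, and separation of variables identifies the unique $C^{1}$ solution of the $z$-equation as $z(t)=F_{z_{0}}^{-1}(t-t_{0})$, defined for all $t\geq t_{0}$. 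Uniqueness follows directly from the integral formula, bypassing the fact that $u\mapsto u^{2}\alpha(u)$ need not be locally Lipschitz. Setting $y(t)=1/z(t)$ then delivers the unique forward-complete solution of \eqref{eqn:dotyeqn}.

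To put the answer in the form \eqref{eqn:ytsolutionform}, I would define $\sigma(r,s):=F_{r}^{-1}(s)$ for $r>0$ and $s\geq 0$, extended by $\sigma(0,s):=0$; then $y(t)=1/\sigma(1/y_{0},t-t_{0})$. The class-$\mathcal{KL}$ properties follow by inverting $F_{r}$: for fixed $s\geq 0$, $r\mapsto\sigma(r,s)$ is continuous and strictly increasing (two $z$-solutions with distinct positive initial data cannot cross, since $F_{r}$ inverts uniquely in $r$), with $\sigma(0,s)=0$ and continuity at $r=0$ guaranteed by the sandwich $0\leq \sigma(r,s)\leq r$; for fixed $r>0$, $s\mapsto\sigma(r,s)$ is strictly decreasing (inverse of the strictly decreasing $F_{r}$) and tends to $0$ as $s\to\infty$, again by the divergence of $F_{r}$ at $0^{+}$. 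The main obstacle is concentrated in that divergence computation: class $\mathcal{K}$ provides only continuity of $\alpha$, so Picard--Lindel\"of is not available to assert uniqueness or smooth dependence on initial data, and the same single estimate on $F_{r}$ must carry both forward completeness and the $s\to\infty$ limit.
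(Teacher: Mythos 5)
Your proof is correct, and it reaches the conclusion by a genuinely more self-contained route than the paper. The paper makes the same substitution $z=1/y$, obtaining $\dot z=-\bar\alpha(z)$ with $\bar\alpha(z)=z^2\alpha(z)$ of class $\Kinfinity$, but then outsources the two hard points: uniqueness is obtained from Peano's uniqueness theorem (the right-hand side is continuous and non-increasing in $z$), and forward completeness together with the $\KLinfinity$ form $z(t)=\sigma(z_0,t-t_0)$ is obtained by invoking the proof of Lemma 4.4 in Khalil. You instead solve the scalar equation explicitly by separation of variables and extract everything---uniqueness without any Lipschitz or monotonicity hypothesis, global forward existence, and all the class-$\KLinfinity$ properties of $\sigma(r,s)=F_r^{-1}(s)$---from the single estimate
\begin{equation*}
F_{r}(\rho)=\int_{\rho}^{r}\frac{du}{u^{2}\alpha(u)}\;\geq\;\frac{1}{\alpha(\varepsilon)}\Bigl(\frac{1}{\rho}-\frac{1}{\varepsilon}\Bigr)\xrightarrow[\rho\to0^{+}]{}\infty .
\end{equation*}
In effect you have reproved the content of the cited Khalil lemma rather than quoting it; what this buys is an explicit construction of $\sigma$ and a clear view of exactly where the class-$\Kinfinity$ hypothesis is used (the divergence of the integral at $0^{+}$ simultaneously prevents finite-time escape to $z=0$ and forces $\sigma(r,s)\to0$ as $s\to\infty$), at the cost of a longer argument. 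One small point you could tighten: continuity and strict monotonicity of $r\mapsto\sigma(r,s)$ are asserted via non-crossing of solutions; this is fine given your uniqueness statement, but it becomes immediate if you write $F_r(\rho)=\Phi(\rho)-\Phi(r)$ for a fixed antiderivative $\Phi$ of $-1/(u^2\alpha(u))$, so that $\sigma(r,s)=\Phi^{-1}(\Phi(r)+s)$ is manifestly continuous, strictly increasing in $r$, and strictly decreasing in $s$.
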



\begin{proof}
	Under the change of variables $z = \frac{1}{y}$, the dynamical system \eqref{eqn:dotyeqn} becomes
	\begin{align}\label{eqn:lemma1}
		\dot{z} = -\frac{\dot{y}}{y^2} = - \frac{\alpha\left( \frac{1}{y} \right)}{y^2} = -\alpha(z) z^2 := - \bar{\alpha}(z).
	\end{align}
	Since $\alpha(z)$ is a class $\Kinfinity$ function, it follows that $\bar{\alpha}(z) = \alpha(z) z^2$ is a class $\Kinfinity$ function. The fact that $\bar{\alpha}(z)$ is a continuous, non-increasing function for all $z\ge 0$ implies that \eqref{eqn:lemma1} has a unique solution for every initial state $z_0>0$; see Peano's Uniqueness Theorem (Thm. 1.3.1 in \cite{agarwal1993uniqueness}, Thm. 6.2  in \cite{hartman2002ODE}). Furthermore, by the proof of Lemma 4.4 of \cite{KHALIL01}, it follows that the solution is defined on $[t_0, \infty)$ and is given by
	$$
	z(t) = \sigma(z_0, t-t_0),
	$$
	with $\sigma$ a class $\KLinfinity$ function.  Converting from $z$ back to $y$ through $y = \frac{1}{z}$ yields the solution $y(t)$ given in \eqref{eqn:ytsolutionform}.
\end{proof}


We now have the necessary framework in which to prove Theorem \ref{thm:main}.


\begin{proof} (of Theorem \ref{thm:main})
	Utilizing \eqref{eqn:Binequality} and \eqref{eqn:Bdotinequality}, we have that
	\begin{equation}\label{eqn:BdotIneq}
		\dot{B} \leq \alpha_3 \circ \alpha_2^{-1} \left( \frac{1}{B} \right) := \alpha\left( \frac{1}{B} \right).
	\end{equation}
	Since the inverse of a class $\Kinfinity$ function is a class $\Kinfinity$ function, and the composition of class $\Kinfinity$ functions is a class $\Kinfinity$ function, $\alpha = \alpha_3 \circ \alpha_2^{-1}$ is a class $\Kinfinity$ function.
	
	Let $x(t)$ be a solution of \eqref{eqn:dynamicalsystem} with $x_0 \in \mathrm{Int}(\mathcal{C})$, and let $B(t) = B(x(t))$. The next step is to apply the Comparison Lemma to \eqref{eqn:BdotIneq} so that  $B(t)$ is upper bounded by the solution of \eqref{eqn:dotyeqn}. To do so, it must be noted that the hypothesis ``$f(t,u)$ is locally Lipschitz in $u$'' used in the proof of Lemma 3.4 in \cite{KHALIL01},  can be replaced by with the hypothesis ``$f(t,u)$ is continuous, non-increasing in $u$''. This is valid because the proof only uses the local Lipschitz assumption to obtain uniqueness of solutions to \eqref{eqn:dotyeqn}, and this was taken care of with  Peano's Uniqueness Theorem in the proof of Lemma \ref{lem:simplesystem}.
	
	Hence, the Comparison Lemma in combination with Lemma \ref{lem:simplesystem} yields
	\begin{equation}\label{eqn:barrierineq}
		B(x(t)) \leq \frac{1}{\sigma\left( \frac{1}{B(x_0)} , t - t_0 \right)},
	\end{equation}
	for all $t \in I(x_0)$, where $x_0=x(t_0)$.  This, coupled with the left inequality in \eqref{eqn:Binequality}, implies that
	\begin{eqnarray}
		\label{eqn:maininequality}
		\alpha^{-1}_1\left(\sigma\left( \frac{1}{B(x_0)} , t - t_0 \right)\right)  \leq  h(x(t)),
	\end{eqnarray}
	for all $t \in I(x_0)$.  By the properties of class $\Kinfinity$ and $\KLinfinity$ functions, if $x_0 \in \mathrm{Int}(\mathcal{C})$ and hence $B(x_0) > 0$, it follows from \eqref{eqn:maininequality} that $ h(x(t)) > 0$ for all $t \in I(x_0)$.  Therefore, $x(t) \in \mathrm{Int}(\mathcal{C})$ for all $t \in I(x_0)$, which implies that $\mathrm{Int}(\mathcal{C})$ is forward invariant.
\end{proof}

\begin{remark}\label{remBarrier}
	Inequality \eqref{eqn:barrierineq} is the essential condition to make $B$ a reciprocal barrier function, because it ensures that $B(x(T))\neq\infty$ for any finite $T \in I(x_0)$, which implies that $h(x(T))> 0$ for any $T\in I(x_0)$ if $h(x_0)>0$.
\end{remark}

\begin{remark}\label{remBF}
	Note that the function considered in \eqref{eqn:Bmotivation}, subject to the condition \eqref{eqn:barriercond} for some $\gamma>0$, is a RBF by Def. \ref{def:barrierfunctions}. 
	This follows from the fact that
	$$
	\alpha(r) =\begin{cases} \frac{1}{- \log\left( \frac{r}{1 + r} \right)} &\mbox{if }r >0 \\
	0 & \mbox{if } r=0. \end{cases}
	$$
	is a class $\Kinfinity$ function.  Therefore, in Def. \ref{def:barrierfunctions}, we choose $\alpha_1(r) = \alpha_2(r) = \alpha(r)$ and $\alpha_3(r) = \gamma \alpha(r)$. Note also that the function \eqref{eqn:barrierfunc2} satisfying \eqref{eqn:barriercond} is also a RBF with class $\Kinfinity$ functions $\alpha_1(r) = \alpha_2(r) = r$ and $\alpha_3(r) = \gamma r$.
\end{remark}

\subsection{Zeroing Barrier Functions}
Intrinsic to the notion of RBF is the fact, formalized in \eqref{eqn:Binequality}, that such a function tends to plus infinity as its argument approaches the boundary of $C$. Unbounded function values, however, may be undesirable when real-time/embedded implementations are considered. Motivated by this and the barrier certificates in \cite{KongExpoBarrier13}, we study a barrier function that vanishes on the boundary of the set $\C$. This is facilitated by fist defining the notion of an extended class $\mathcal{K}$ function.


\begin{definition}\label{def:extend}
	A continuous function \mbox{$\alpha:(-b,a)\rightarrow (-\infty,\infty)$} is said to belong to \emph{extended class $\mathcal{K}$} for some $a,b>0$ if it is strictly increasing and $\alpha(0)=0$. \\
\end{definition}

\begin{definition}
	\label{def:barrierfunctions2}
	For the dynamical system \eqref{eqn:dynamicalsystem}, a continuously differentiable function $h: \R^n \to \R$ is a \emph{zeroing barrier function (ZBF)} for the set $\C$ defined by \eqref{eqn:superlevelsetC}-\eqref{eqn:superlevelsetC3}, if there exist an extended class $\Kinfinity$ function $\alpha$ and a set $\D$ with $\C\subseteq\mathcal{D}\subset \R^n$ such that, for all $x \in \D$,
	\begin{align}
		L_f{h}(x)& \geq  -\alpha(h(x)). \label{eqn:ZBFinequality}
	\end{align}
	%
\end{definition}

\begin{remark}
	Defining $h$ on a set $\mathcal{D}$ larger than $\C$ allows one to consider the effects of model perturbations. This idea is developed in the conference submission \cite{Xu2015ADHS}, where it is also illustrated on a realistic problem.
\end{remark}

\textcolor{black}{
	\begin{remark}
		A special case of \eqref{eqn:ZBFinequality} is
		\begin{align}
			L_f{h}(x)& \geq  -\gamma h(x), \label{eqn:ZBFinequalityLinear}
		\end{align}
		for $\gamma>0$. This leads to a convex problem when seeking barrier functions with numerical means, such as sum of squares (SOS) \cite{Prajna2007siam}.
	\end{remark}
}

Similar to Theorem \ref{thm:main}, existence of a ZBF implies the forward invariance of $\mathcal{C}$, as shown by the following theorem.

\begin{proposition}\label{thm:GBF}
	Given a dynamical system \eqref{eqn:dynamicalsystem} and a set $\C$ defined by \eqref{eqn:superlevelsetC}-\eqref{eqn:superlevelsetC3} for some continuously differentiable function $h: \R^n\rightarrow \R$, if $h$ is a ZBF defined on the set $\D$ with $\C\subseteq\mathcal{D}\subset \R^n$, then $\C$ is forward invariant.
\end{proposition}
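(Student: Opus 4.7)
The plan is to parallel the proof of Theorem~\ref{thm:main}, replacing the reciprocal comparison ODE from Lemma~\ref{lem:simplesystem} with the simpler scalar system suggested directly by \eqref{eqn:ZBFinequality}. Given $x_0 \in \C$, set $y_0 := h(x_0) \geq 0$ and consider
$$
\dot y = -\alpha(y), \qquad y(t_0) = y_0.
$$
Because $\alpha$ is an extended class $\Kinfinity$ function, the right-hand side $-\alpha$ is continuous and non-increasing on $\R$, so Peano's Uniqueness Theorem (exactly as invoked in the proof of Lemma~\ref{lem:simplesystem}) delivers a unique solution $y(\cdot)$ on $[t_0,\infty)$. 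Since $\alpha(0)=0$, the constant $y\equiv 0$ is itself a solution, and uniqueness then forces $y(t) \geq 0$ for all $t \geq t_0$ whenever $y_0 \geq 0$.

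Next, I would apply the Comparison Lemma along a trajectory $x(\cdot)$ of \eqref{eqn:dynamicalsystem}, using the version employed in the proof of Theorem~\ref{thm:main} in which the local Lipschitz hypothesis on the right-hand side may be replaced by continuity and monotonicity in the state (both of which hold for $-\alpha$). Since \eqref{eqn:ZBFinequality} is only asserted on $\D$, I work on the maximal interval on which the trajectory lies in $\D$, namely $[t_0,T^*)$ with
$$
T^* := \sup\{\,t \in I(x_0) : x(s) \in \D~\text{for all}~s \in [t_0,t]\,\}.
$$
Continuity of $x(\cdot)$ and $x_0 \in \C \subseteq \D$ yield $T^* > t_0$. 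On $[t_0,T^*)$ the ZBF inequality holds along the trajectory, and the Comparison Lemma therefore gives $h(x(t)) \geq y(t) \geq 0$, i.e.\ $x(t) \in \C$. A standard bootstrapping step then upgrades $T^*$ to $\tau_{\mathrm{max}}$: if $T^* < \tau_{\mathrm{max}}$, continuity would give $h(x(T^*)) \geq 0$, hence $x(T^*) \in \C \subseteq \D$, so restarting the comparison at $T^*$ with new initial value $h(x(T^*))$ would extend the property past $T^*$, contradicting the definition of $T^*$. Therefore $h(x(t)) \geq 0$ for all $t \in I(x_0)$ and $\C$ is forward invariant.

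The main obstacle I anticipate is precisely this last bookkeeping step involving $\D$: because \eqref{eqn:ZBFinequality} is only posited on $\D$ (not on all of $\R^n$) and $\D$ is not required to be open, the comparison must be carefully propagated from any $x_0 \in \C$ through all of $I(x_0)$, rather than being applied in one shot. Beyond that, the argument is essentially a direct transcription of the machinery already developed for Theorem~\ref{thm:main}, with the key simplification that the relevant equilibrium of the comparison system is $0$ itself, so no auxiliary change of variables à la Lemma~\ref{lem:simplesystem} is needed.
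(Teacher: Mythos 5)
Your route is genuinely different from the paper's: the paper disposes of this proposition in one line by observing that \eqref{eqn:ZBFinequality} gives $L_f h(x) \ge -\alpha(0) = 0$ for $x \in \partial\C$ and then invoking Nagumo's theorem, whereas you transplant the comparison-lemma machinery of Theorem \ref{thm:main}. Unfortunately, the step you yourself flag as the main obstacle --- propagating the comparison past $T^*$ --- is a genuine gap, not mere bookkeeping. The definition of a ZBF only requires $\C \subseteq \D$, and the paper really does need the extreme case $\D = \C$ (Proposition \ref{propnecezbf} proves the converse with exactly that choice). In that case your ``restart'' at $T^*$ is circular: to apply the Comparison Lemma on any interval $[T^*, T^*+\epsilon)$ you need \eqref{eqn:ZBFinequality} to hold along the trajectory there, which requires $x(t) \in \D$ there, which is precisely what the definition of $T^*$ denies. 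Concretely, at such a $T^*$ one has $h(x(T^*)) = 0$ and only $L_f h(x(T^*)) \ge 0$; if this derivative is exactly zero, nothing in the along-trajectory inequality prevents $h(x(t))$ from going negative immediately afterwards. Indeed, the scalar function $\phi(t) = -(t-1)^3$ satisfies $\dot\phi(t) \ge -\alpha(\phi(t))$ at every $t$ with $\phi(t) \ge 0$, for the extended class $\Kinfinity$ function $\alpha(r) = 3\,r^{2/3}$ (extended oddly to negative arguments), and yet it crosses zero. So the data you feed to the Comparison Lemma --- the differential inequality restricted to the times when the state is in $\C$ --- is simply not strong enough to conclude $h \ge 0$; the proposition in the case $\D=\C$ is genuinely Nagumo's theorem, whose proof uses the boundary condition at \emph{all} points of $\partial\C$ together with local Lipschitz continuity of $f$, not just the inequality along one trajectory.

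Your argument is correct, and faithful in spirit to Theorem \ref{thm:main}, under the additional hypothesis that $\D$ is open (or at least a neighborhood of $\C$): then $x(T^*) \in \C \subseteq \D$ forces $x(t) \in \D$ on some $[T^*, T^*+\epsilon)$ by continuity alone, the definition of $T^*$ is contradicted directly without any restart of the comparison, and your treatment of the comparison system $\dot y = -\alpha(y)$ (Peano uniqueness from monotonicity of $-\alpha$, plus the equilibrium at $0$ pinning solutions to $[0,\infty)$) correctly closes the proof; this is essentially the setting of Proposition \ref{Prop:SetStability}. To prove the proposition as stated you must either add openness of $\D$ as a hypothesis or, as the paper does, argue on $\partial\C$ via Nagumo's theorem.
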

\begin{proof}
	Note that for any $x \in \partial\C$, $\dot{h}(x)\geq -\alpha(h(x))=0$. According to Nagumo's theorem \cite{BlanchiniBook08,BlanchiniAuto99}, the set $\C$ is forward invariant.
\end{proof}

\begin{remark}
	As stated in  Remark \ref{remBarrier}, what makes function $B$ of Def. \ref{def:barrierfunctions} a barrier is that $B(x(T)) < \infty$ for any finite $T\in I(x(t_0))$. Here, what makes function $h$ of Def. \ref{def:barrierfunctions2} a barrier is that $h(x(T))>0$ for any finite $T\in I(x(t_0))$.
\end{remark}

%



For a ZBF $h$ defined on a set $\D$, if $\mathcal{D}$ is open, then $h$ induces a Lyapunov function $V_\C:\mathcal{D}\to \mathbb{R}_0^+$ defined by
\begin{equation}
	\label{Eq:V}
	V_\C(x)=\left\{\begin{array}{ccl}
		0, & \text{if} & x\in \C,\\
		-h(x), & \text{if} & x\in\mathcal{D}\backslash\C.\end{array}\right.
\end{equation}
It is easy to see that: \textbf{1)} $V_\C(x)=0$ for $x\in \C$; \textbf{2)} $V_\C(x)>0$ for $x\in\mathcal{D}\backslash\C$; and \textbf{3)} $L_f V_\C(x)$ satisfies the following inequality for $x\in\mathcal{D}\backslash \C$:
$$L_f V_\C(x) =  -L_f h(x)\le\alpha\circ h(x) = \alpha(-V_\C(x))<0,$$
where $\alpha$ is the extended class $\Kinfinity$ function introduced in Def. \ref{def:barrierfunctions2}.
It thus follows from these three properties, from the fact that $V_\C$ is continuous on its domain and continuously differentiable at every point $x\in\mathcal{D}\backslash \C$, and from\footnote{While Theorem 2.8 requires the function $V$ to be smooth, $V$ can always be smoothed as shown in Proposition 4.2 in~\cite{SmoothConverse96}.} Theorem 2.8 in~\cite{SmoothConverse96} that the set $\C$ is asymptotically stable whenever~\eqref{eqn:dynamicalsystem} is forward complete or the set $\C$ is compact. This is summarized in the following result.

\begin{proposition}
	\label{Prop:SetStability}
	Let $h:\mathcal{D}\to\mathbb{R}$ be a continuously differentiable function defined on an open set $\mathcal{D}\subseteq\mathbb{R}^n$. If $h$ is a ZBF for the dynamical system~\eqref{eqn:dynamicalsystem}, then the set $\C$ defined by $h$ is asymptotically stable. Moreover, the function $V_\C$ defined in \eqref{Eq:V} is a Lyapunov function.
\end{proposition}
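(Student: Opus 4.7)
The plan is to show that the function $V_\C$ defined in \eqref{Eq:V} serves as a Lyapunov function for the set $\C$, thereby certifying its asymptotic stability. Forward invariance of $\C$ is already furnished by Proposition \ref{thm:GBF}, so the task reduces to establishing attractivity from $\mathcal{D}$ and formalizing the Lyapunov structure. The three properties listed just before the proposition statement form the backbone of the argument; my job is to verify each of them rigorously and then reconcile the fact that $V_\C$ is only continuous (not $C^1$) across $\partial\C$ with the hypotheses of the invoked converse Lyapunov theorem.

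First I would verify continuity and positivity of $V_\C$. On $\C$, $V_\C \equiv 0$ by construction. On $\mathcal{D}\setminus\C$ we have $h(x)<0$ by \eqref{eqn:superlevelsetC3}, so $V_\C(x)=-h(x)>0$. Continuity on all of $\mathcal{D}$ follows since for $x\in\partial\C$ we have $h(x)=0$, so both branches of the piecewise definition agree in the limit. Thus $V_\C$ is continuous on $\mathcal{D}$ and $C^1$ on the open set $\mathcal{D}\setminus\C$. On this latter set, using $V_\C=-h$ together with the ZBF inequality \eqref{eqn:ZBFinequality},
\begin{equation*}
L_f V_\C(x) \;=\; -L_f h(x) \;\le\; \alpha(h(x)) \;=\; \alpha(-V_\C(x)) \;<\;0,
\end{equation*}
where strict negativity uses that $\alpha$ is extended class $\mathcal{K}$ and $h(x)<0$ on $\mathcal{D}\setminus\C$.

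The main obstacle is that $V_\C$ fails to be continuously differentiable across $\partial\C$, whereas Theorem 2.8 in~\cite{SmoothConverse96} is stated for smooth Lyapunov functions. I would address this exactly as the authors hint in the footnote: invoke Proposition 4.2 of~\cite{SmoothConverse96} to smooth $V_\C$ into a $C^\infty$ function that vanishes on $\C$, is strictly positive on $\mathcal{D}\setminus\C$, and whose Lie derivative along $f$ is still strictly negative on $\mathcal{D}\setminus\C$. Care must be taken to ensure that the smoothing procedure is compatible with the decrease condition; since the inequality $L_f V_\C < 0$ holds on the open set $\mathcal{D}\setminus\C$ and $V_\C$ already vanishes on a neighborhood of $\C$ in the weak sense needed, the cited smoothing result applies without modification.

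Having a smooth Lyapunov function satisfying properties (1)--(3), Theorem 2.8 of~\cite{SmoothConverse96} then yields asymptotic stability of $\C$, provided either that \eqref{eqn:dynamicalsystem} is forward complete or that $\C$ is compact (both of which are stated in the hypothesis chain). Combined with the forward invariance from Proposition \ref{thm:GBF}, this gives asymptotic stability of $\C$, and by construction $V_\C$ (or its smoothed version) serves as the Lyapunov function, completing the proof.
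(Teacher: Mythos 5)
Your proposal is correct and follows essentially the same route as the paper: verifying the three Lyapunov properties of $V_\C$ directly from the ZBF inequality, handling the lack of differentiability across $\partial\C$ via the smoothing result (Proposition 4.2 of~\cite{SmoothConverse96}), and concluding asymptotic stability from Theorem 2.8 of that reference under forward completeness or compactness of $\C$. No gaps to report.
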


Note that asymptotic stability of $\mathcal{C}$ implies forward invariance of $\mathcal{C}$ as described in \cite{Xu2015ADHS}.
Therefore, existing robustness results in the literature (such as \cite{Bacc2005Lyapunovfunctionbook,Isidori1999NonlinearBook2}) can be used to characterize  the extent to which forward invariance of the set $\C$  is robust with respect to different perturbations on the dynamics~\eqref{eqn:dynamicalsystem}. The reader is referred to \cite{Xu2015ADHS} for further discussion and an application.

\subsection{Relationships of RBFs, ZBFs and Set Invariance}\label{subsec:necesuff}
Theorem \ref{thm:main} and Prop. \ref{thm:GBF} in the two previous subsections show that the existence of a RBF (resp., a ZBF) is a sufficient condition for the forward invariance of $\mathrm{Int}(\mathcal{C})$ (resp., $\C$). This section investigates cases where the converse holds and other relations among these two types of barrier functions.

\begin{proposition}\label{propnecezbf}
	Consider the dynamical system \eqref{eqn:dynamicalsystem} and a nonempty, compact set $\C$ defined by \eqref{eqn:superlevelsetC}-\eqref{eqn:superlevelsetC3}
	for a continuously differentiable function $h$. If $\C$  is forward invariant,  then $h\vert_\C$ is a ZBF defined on $\C$.
\end{proposition}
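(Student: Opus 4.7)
The plan is to construct an extended class $\mathcal{K}_\infty$ function $\alpha$ that witnesses the ZBF inequality $L_f h(x) \geq -\alpha(h(x))$ for every $x \in \C$, with the ZBF defined on $\D = \C$. The candidate $\alpha$ will be built by majorizing the worst-case deficit of $L_f h$ viewed as a function of the level of $h$.

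First I would invoke Nagumo's theorem to translate forward invariance of $\C = \{h \geq 0\}$ into the pointwise inequality $L_f h(x) \geq 0$ for every $x \in \partial\C$. (At any boundary point where $\nabla h$ vanishes the inequality is automatic, so no smoothness of $\partial\C$ is needed.) Setting $h_{\max} = \max_{\C} h$, which is finite by compactness, I would then define
\[
\psi(r) := \max\bigl\{\max(-L_f h(x), 0) : x \in \C,\, h(x) \leq r\bigr\}
\]
for $r \in [0, h_{\max}]$. Compactness of $\{x \in \C : h(x) \leq r\}$ and continuity of $L_f h$ make the max well-defined, and $\psi$ is manifestly non-decreasing and non-negative with $\psi(0) = 0$ by the preceding step.

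The crux of the argument is continuity of $\psi$ at $0$: if it failed, some sequence $r_n \downarrow 0$ would yield maximizers $x_n \in \C$ with $h(x_n) \to 0$ but $-L_f h(x_n) \geq \epsilon > 0$; extracting a convergent subsequence by compactness of $\C$, the limit $x^*$ lies in $\partial\C$ and satisfies $-L_f h(x^*) \geq \epsilon$, contradicting Nagumo. Continuity at $r > 0$ follows in the same spirit, exploiting that maximizers with $h(x^*) = r > 0$ lie in $\mathrm{Int}(\C)$ and are approachable from within $\{h < r\}$ by continuity of $h$.

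Finally, I would set $\alpha(r) = \psi(r) + r$ on $[0, h_{\max}]$, extend linearly on $[h_{\max}, \infty)$, and extend to negative arguments by $\alpha(r) = r$; the result is a continuous, strictly increasing function on $\R$, vanishing at $0$ and unbounded at $\pm\infty$, hence extended class $\mathcal{K}_\infty$. By construction $\alpha(h(x)) \geq \psi(h(x)) \geq -L_f h(x)$ for all $x \in \C$, which is exactly \eqref{eqn:ZBFinequality}. The main obstacle is the continuity claim for $\psi$ at $0$; this is precisely where compactness of $\C$ and Nagumo's inequality both enter, and if either hypothesis were dropped, $\psi$ could stay bounded away from $0$ near the origin, leaving no class $\mathcal{K}$ majorant.
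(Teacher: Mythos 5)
Your proof follows essentially the same route as the paper's: both define a non-decreasing majorant of $-L_fh$ as a function of the level $r$ over the sublevel set $\{x\in\C: 0\le h(x)\le r\}$ (the paper uses $\alpha(r)=-\inf_{\{0\le h\le r\}}L_fh(x)$), use Nagumo's theorem together with compactness to show it is nonpositive in the limit $r\to 0^+$, and then dominate it by an (extended) class $\mathcal{K}$ function. The only quibble is your continuity claim for $\psi$ at interior levels $r>0$ --- a maximizer at level $r$ may be a local minimum of $h$ and hence not approachable from $\{h<r\}$, so $\psi$ is in general only right-continuous and non-decreasing; this is harmless, however, because any bounded non-decreasing function with $\psi(0^+)=0$ admits a continuous, strictly increasing majorant vanishing at $0$, which is precisely the (equally unproved) majorization step the paper itself invokes.
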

\begin{proof}
	We take $\mathcal{D}=\C$ in Def. \ref{def:barrierfunctions2}.
	For any $r \geq0$, the set $\{x|0\leq h(x)\leq r\}$ is a compact subset of $\C$. Define a function $\alpha:[0,\infty)\rightarrow \R$ by
	\begin{align*}
		\alpha(r)=-\inf_{\{x|0\leq h(x)\leq r\}}L_fh(x).
	\end{align*}
	Using the compactness property stated above and the continuity of $L_fh$, $\alpha$ is a well defined, non-decreasing function on $\R_0^+$ satisfying
	$$
	L_fh(x)\geq -\alpha\circ h(x),\;\forall x\in\mathcal{C}.
	$$
	
	By Nagumo's theorem \cite{BlanchiniBook08,BlanchiniAuto99}, the invariance of $\C$ is equivalent to
	$$
	h(x)=0 \quad \Rightarrow \quad L_fh(x)\geq 0,
	$$
	which implies that $\alpha(0)\leq 0$. There always exists a class $\mathcal{K}$ function $\hat\alpha$ defined on $[0,\infty)$ that upper-bounds $\alpha$, yielding $\dot{h}(x)\geq -\hat\alpha(h(x))$ for all $x\in \C$.	
	This completes the proof.
\end{proof}

Propositions \ref{thm:GBF} and  \ref{propnecezbf} together show that a set $\C$ is forward invariant if, and only if, it admits a ZBF. Before addressing necessity for RBFs, a lemma is given. The shorthand notation $\dot{h}(x)$ is used for $L_fh(x)$, analogous to common usage for Lyapunov functions.

\begin{lemma}\label{lemiff}
	Consider the dynamical system \eqref{eqn:dynamicalsystem} and a nonempty, compact set $\C$ defined by \eqref{eqn:superlevelsetC}-\eqref{eqn:superlevelsetC3}
	for a continuously differentiable function $h$. If $\dot{h}(x)>0$ for all $x\in\partial\C$, then for each integer $k\geq 1$, there exists a constant $\gamma>0$ such that
	$$\dot{h}(x)\geq -\gamma h^k(x),\;\forall x\in\mathrm{Int}(\mathcal{C}).$$
\end{lemma}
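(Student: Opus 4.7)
The plan is a straightforward two-region argument that splits $\C$ into a thin boundary layer, where $\dot h$ is already nonnegative, and a compact interior region, where $h$ (hence $h^k$) is bounded away from zero so a large enough $\gamma$ can absorb any negative values of $\dot h$.

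First I would use compactness of $\partial\C$ and continuity of $\dot h = L_f h$ to set $c := \min_{x \in \partial\C}\dot h(x) > 0$. Then I would claim there exists $\delta > 0$ such that
\[
x \in \C, \ h(x) < \delta \ \Longrightarrow \ \dot h(x) \geq \tfrac{c}{2}.
\]
To see this, observe that $K := \{x \in \C : \dot h(x) \leq c/2\}$ is closed in the compact set $\C$, hence compact, and is disjoint from $\partial\C$ by the definition of $c$. Therefore $\delta := \min_{x \in K} h(x)$ exists and is strictly positive (as $h > 0$ on $\mathrm{Int}(\C) \supseteq K$). Consequently, whenever $h(x) < \delta$ we have $x \notin K$, i.e., $\dot h(x) > c/2$.

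Second, on the compact complementary region $K_\delta := \{x \in \C : h(x) \geq \delta\}$, the continuous function $\dot h$ attains a finite minimum $-M$ with $M \geq 0$, while $h^k(x) \geq \delta^k > 0$. Choosing any $\gamma \geq M/\delta^k$, I obtain on $K_\delta$ the bound $\dot h(x) + \gamma h^k(x) \geq -M + \gamma\delta^k \geq 0$. On the complementary thin layer $\{x \in \C : h(x) < \delta\}$, the first step gives $\dot h(x) \geq c/2 > 0 \geq -\gamma h^k(x)$, so the inequality holds trivially. Combining the two cases yields $\dot h(x) \geq -\gamma h^k(x)$ on all of $\C$, and in particular on $\mathrm{Int}(\C)$.

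I do not expect any serious obstacle. The only subtle point is verifying that the boundary layer can be chosen so that $\dot h$ is bounded below by a positive constant there; this is where I used a slightly non-obvious compactness argument (looking at the sublevel set of $\dot h$ and taking its $h$-distance to $\partial\C$), rather than a direct $\varepsilon$-$\delta$ argument, which would require more bookkeeping given that $h$ need not be proper near $\partial \C$ in any quantitative sense. The rest is elementary estimation over compact sets.
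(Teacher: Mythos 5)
Your proof is correct and follows essentially the same two-region strategy as the paper's: a layer near $\partial\C$ on which $\dot h>0$ (so the inequality holds trivially because $-\gamma h^k\le 0$ there), plus a compactness bound for $-\dot h/h^k$ on the complementary region. The only difference is cosmetic --- you carve out the boundary layer as the sublevel set $\{x\in\C: h(x)<\delta\}$ rather than as a metric $\varepsilon$-neighborhood of $\partial\C$, which if anything keeps the complementary compact set more cleanly separated from $\{h=0\}$.
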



\begin{proof}
	Because \mbox{$\C=\overline{\mathrm{Int}(\mathcal{C})}$} and $\C$ is nonempty, we have $\mathrm{Int}(\mathcal{C})\neq\emptyset$. Furthermore, because $\dot{h}(x)>0$ for all $x\in\partial\C$, by the continuity of $\dot{h}$, there exists $\ep_0>0$ such that $\dot{h}(x)>0$ for all $x\in\mathcal{Q}\subsetneqq\C$ where $\mathcal{Q}:=(B_{\ep_0}(0)\oplus \partial \C)\cap\mathrm{Int}(\mathcal{C})$ is an open set contained in $\mathrm{Int}(\mathcal{C})$.
	It follows that $\dot{h}(x)\geq -\gamma' h^k(x)$ holds for any $x\in\mathcal{Q}$ and any constant $\gamma'>0$, because the left hand side is non-negative and the right hand side is non-positive.
	
	Note that $\overline{\C\backslash \mathcal{Q}}$ is a compact subset because $\C$ is compact; moreover,
	$-\frac{\dot{h}}{h^k}$ is well-defined and continuous in $\overline{\C\backslash \mathcal{Q}}$. Hence, we can choose some constant $\gamma''\geq\max_{\{x|x\in\overline{\C\backslash \mathcal{Q}}\}}-\frac{\dot{h}(x)}{h^k(x)}$, such that
	$\dot{h}(x)\geq -\gamma'' h^k(x)$ holds for any \mbox{$x\in\overline{\C\backslash \mathcal{Q}}$}.
	
	Taking $\gamma=\gamma''$, we have $\dot{h}(x)\geq -\gamma h^k(x)$ for any
	$x\in\mathrm{Int}(\mathcal{C})$, which completes the proof.
\end{proof}

Based on the lemma, we have the following theorem.

\begin{theorem}\label{thm:CBFiff}
	Under the assumptions of Lemma \ref{lemiff},
	$B=\frac{1}{h}:\mathrm{Int}(\mathcal{C}) \to \R$ is a RBF and
	$h: \C \to \R$ is a ZBF for $\C$.
\end{theorem}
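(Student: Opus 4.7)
The plan is to verify the two claims separately, each reducing to a single application of Lemma \ref{lemiff} with an appropriately chosen exponent $k$.

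For the ZBF claim, I would take $\D = \C$ in Def. \ref{def:barrierfunctions2}. Applying Lemma \ref{lemiff} with $k=1$ produces a constant $\gamma>0$ such that $\dot{h}(x) \geq -\gamma h(x)$ for every $x \in \IntC$. On $\partial\C$, where $h(x)=0$, the hypothesis $\dot{h}(x) > 0$ gives $\dot{h}(x) > 0 = -\gamma h(x)$, so the inequality extends to all of $\C$. The function $\alpha(r) = \gamma r$ (extended to $\R$) is a strictly increasing continuous function vanishing at zero, hence an extended class $\Kinfinity$ function, and it realizes the ZBF condition \eqref{eqn:ZBFinequality} on $\C$. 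This is precisely the linear special case highlighted in the remark following Def. \ref{def:barrierfunctions2}.

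For the RBF claim, note first that $B = 1/h$ is well-defined and continuously differentiable on $\IntC$ because $h > 0$ there. Choosing $\alpha_1(r) = \alpha_2(r) = r$ makes the sandwich bound \eqref{eqn:Binequality} hold with equality. To verify the growth condition \eqref{eqn:Bdotinequality}, I would compute $L_f B = -\dot{h}/h^2$ and then apply Lemma \ref{lemiff} with $k = 3$ to obtain a constant $\gamma>0$ such that $\dot{h}(x) \geq -\gamma h^3(x)$ on $\IntC$. Dividing by $h^2>0$ yields $L_f B(x) \leq \gamma h(x) = \alpha_3(h(x))$ with $\alpha_3(r) = \gamma r \in \Kinfinity$, which completes the verification of Def. \ref{def:barrierfunctions}.

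The only non-routine step is identifying the right exponent in Lemma \ref{lemiff} for the RBF part: because the expression for $L_f B$ introduces a factor $1/h^2$, one must invoke the lemma with $k = 3$ (not $k = 1$ or $k = 2$) so that the bound on $\dot h$ produces a right-hand side proportional to $h$, i.e., a genuine class $\Kinfinity$ function of $h$ rather than a constant or a quantity singular at $\partial\C$. Once the correct $k$ is chosen, the whole argument is a short bookkeeping exercise combining $\dot h > 0$ on $\partial\C$ with the interior bounds supplied by Lemma \ref{lemiff}.
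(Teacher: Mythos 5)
Your proof is correct and follows essentially the same route as the paper: invoke Lemma \ref{lemiff} with $k=3$ to get $\dot h \geq -\gamma h^3$, hence $L_fB = -\dot h/h^2 \leq \gamma h$ for $B=1/h$, and with $k=1$ to get the ZBF inequality with $\alpha(r)=\gamma r$. Your explicit extension of the ZBF inequality to $\partial\C$ (where $\dot h>0=-\gamma h$) and the explicit choice $\alpha_1=\alpha_2=\mathrm{id}$ are small completeness details the paper leaves implicit, but the argument is the same.
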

\begin{proof}
	Let $k=3$ in Lemma \ref{lemiff}. Then there exists $\gamma_1>0$ such that for all $x\in\mathrm{Int}(\mathcal{C})$, $\dot{h}\geq -\gamma_1 h^3$ holds, which implies that  $-\frac{\dot{h}}{h^2}\leq \gamma_1 h$ holds, or equivalently, $\dot{B}\leq \frac{\gamma_1}{B}$ holds. By Definition \ref{def:CBF}, $B=\frac{1}{h}$ is a RBF for $\C$.
	
	Let $k=1$ in Lemma \ref{lemiff}. Then there exists $\gamma_2>0$ such that for all $x\in\mathrm{Int}(\mathcal{C})$, $\dot{h}\geq -\gamma_2 h$ holds. By Definition \ref{def:barrierfunctions2}, $h$ is a ZBF defined on $\C$.
\end{proof}

\begin{remark}
	The assumption $\dot{h}(x)>0$ for all $x\in\partial\C$ is called contractivity in \cite{BlanchiniAuto99}, because the flow on the boundary of $\C$ points inward. Without the compactness assumption on $\C$, counterexamples to Thm.~\ref{thm:CBFiff} can be given. Consider a dynamical system on $\mathbb{R}^2$  given by $\dot{x}_1=-\frac{1}{2}x_2$, $\dot{x}_2=-x_1^3+1$.
	Define $\C=\{x|h(x)\geq 0\}$, where $h(x)=x_2-x_1^2$. Note that $\C$ is forward invariant because for any $x\in\partial\C$, $\dot{h}(x)=1> 0$. Clearly, $\C$ is not compact and $\dot{h}=\dot{x}_2-2x_1\dot{x}_1=x_1(x_2-x_1^2)+1$.
	For any $r>0$,
	$$\inf_{\{x|h(x)=r\}}\dot{h}(x)=\inf_{\{x|h(x)=r\}}x_1r+1=-\infty.$$
	Consequently, there cannot exist an extended $\mathcal{K}$ function $\alpha$ such that $\dot{h}\geq-\alpha(h)$, which implies that $h$ cannot be a ZBF for $\C$. Similarly, it is also impossible to find a class $\Kinfinity$ function $\alpha_3$ such that $-\frac{\dot{h}}{h^2}\leq \alpha_3(h)$ (resp. $-\frac{\dot{h}}{h(h+1)}\leq \alpha_3(h)$), which implies that \eqref{eqn:Bmotivation} (resp. \eqref{eqn:barrierfunc2}) cannot be a RBF for $\C$.
\end{remark}

The relationships of RBFs, ZBFs and  the set invariance are summarized in Fig.\ref{figbarrier2}. Note that while a ZBF leads to $\C$ being invariant,  when $\C$ is contractive, $\IntC$ is also invariant.

\section{Control Barrier Functions}
\label{sec:controlbarrierExtensions}
While barrier functions are important tools to verify invariance of a set, they cannot be directly used to design a controller enforcing invariance. By drawing inspiration on how Lyapunov functions were extended to control Lyapunov functions (by Sontag), we propose in this section a similar extension of barrier functions to control barrier functions (CBFs).  It is important to note that CBFs have been considered in the context of existing notions of barrier certificates \cite{Romd2014CDCunitiing,wieland2007constructive,prajna2007framework}.  The construction presented here differs due to the novel RBF condition \eqref{eqn:Bdotinequality} and the ZBF condition  \eqref{eqn:ZBFinequality},  which increases the available control inputs that satisfy the CBF condition.  Ultimately, the true usefulness of this will be seen when CBFs are unified with control Lyapunov functions through quadratic programs in Section \ref{sec:lyapunovfunction}.

\subsection{Reciprocal Control Barrier Functions}\label{sub:CBF}

Consider an affine control system
\begin{eqnarray}
	\label{eqn:controlsys}
	\dot{x} = f(x) + g(x) u,
\end{eqnarray}
with $f$ and $g$ locally Lipschitz, $x \in \R^n$ and $u \in U \subset \R^m$.  Later, we will be particularly interested in the case that $U$ can be expressed as a convex polytope,
\begin{equation}
	\label{eq:Upolytope}
	U=\{u\in\R^m|A_0u\leq b_0\},
\end{equation}
where $A_0$ is a $p\times m$ matrix and $b_0$ is a $p\times 1$ column vector of constants with $p$  some positive integer.

When the set $\mathrm{Int}(\mathcal{C})$ is not forward invariant under the natural dynamics of the system, $\dot{x} = f(x)$, how can a controller be specified that will ensure the invariance of $\mathrm{Int}(\mathcal{C})$? This motivates the following  definition.

\begin{definition}\label{def:CBF}
	Consider the control system \eqref{eqn:controlsys} and the set $\mathcal{C} \subset \R^n$ defined by \eqref{eqn:superlevelsetC}-\eqref{eqn:superlevelsetC3} for a continuously differentiable  function $h$. A continuously differentiable function $B: \mathrm{Int}(\mathcal{C}) \to \R$ is called a \emph{reciprocal control barrier function (RCBF)} if there exist class $\Kinfinity$ functions $\alpha_1,\alpha_2,\alpha_3$ such that, for all $x \in \mathrm{Int}(\mathcal{C})$,
	\begin{align}
		\label{eq:inequality}
		&\quad\quad\quad \frac{1}{\alpha_1(h(x))}   \leq   B(x)    \leq \frac{1}{\alpha_2(h(x))}\\
		\label{eq:Binf}
		& \inf_{u \in U}  \left[ L_f B(x) + L_g B(x) u -\alpha_3(h(x))  \right] \leq 0.
	\end{align}
	The RCBF $B$ is said to be locally Lipschitz continuous if $\alpha_3$ and $\frac{\partial B}{\partial x}$ are both locally Lipschitz continuous.
\end{definition}


Given a RCBF $B$, for all $x\in\mathrm{Int}(\mathcal{C})$, define the set
\begin{equation*}
	\Krcbf(x) =  \{ u \in U : L_f B(x) + L_g B(x) u -  \alpha_3(h(x)) \leq 0\}. \nonumber
\end{equation*}
Considering control values in this set allows us to guarantee the forward invariance of $\C$ via the following straightforward application of Theorem \ref{thm:main}.



\begin{corollary}
	\label{cor:cbf}
	Consider a set $\mathcal{C} \subset \R^n$ be defined by \eqref{eqn:superlevelsetC}-\eqref{eqn:superlevelsetC3} and let $B$ be an associated RCBF for the system \eqref{eqn:controlsys}. Then any locally Lipschitz continuous controller $u: \mathrm{Int}(\C) \to U$ such that $u(x) \in \Krcbf(x)$ will render the set $\mathrm{Int}(\mathcal{C})$ forward invariant.
\end{corollary}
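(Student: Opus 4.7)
The plan is to reduce the statement to Theorem~\ref{thm:main} by showing that the closed-loop system satisfies all the hypotheses of that theorem with $B$ playing the role of an RBF. Fix any locally Lipschitz $u:\mathrm{Int}(\C)\to U$ with $u(x)\in\Krcbf(x)$, and form the closed-loop vector field $f_{\mathrm{cl}}(x):=f(x)+g(x)u(x)$. Since $f$ and $g$ are locally Lipschitz by assumption on \eqref{eqn:controlsys} and $u$ is locally Lipschitz on $\mathrm{Int}(\C)$, $f_{\mathrm{cl}}$ is locally Lipschitz on $\mathrm{Int}(\C)$, so for every $x_0\in\mathrm{Int}(\C)$ the closed-loop system $\dot x=f_{\mathrm{cl}}(x)$ admits a unique solution on some maximal interval $I(x_0)$.

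Next I would verify the two RBF conditions from Definition~\ref{def:barrierfunctions} for $B$ with respect to $f_{\mathrm{cl}}$. The sandwich bound \eqref{eqn:Binequality} is immediate because the inequality \eqref{eq:inequality} in the RCBF definition is identical and makes no reference to the dynamics, so the same class $\Kinfinity$ functions $\alpha_1,\alpha_2$ work. For the derivative bound \eqref{eqn:Bdotinequality}, compute
\begin{equation*}
L_{f_{\mathrm{cl}}} B(x) \;=\; L_f B(x) + L_g B(x)\,u(x),
\end{equation*}
and invoke $u(x)\in\Krcbf(x)$ to conclude $L_{f_{\mathrm{cl}}} B(x)\le \alpha_3(h(x))$ for all $x\in\mathrm{Int}(\C)$, which is exactly \eqref{eqn:Bdotinequality} with the same $\alpha_3$ furnished by the RCBF.

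Thus $B$ is an RBF for the autonomous system $\dot x=f_{\mathrm{cl}}(x)$ in the sense of Definition~\ref{def:barrierfunctions}, and Theorem~\ref{thm:main} then yields forward invariance of $\mathrm{Int}(\C)$ under the closed-loop dynamics, which is the desired conclusion. The only nontrivial point in this reduction is ensuring that the hypotheses of Theorem~\ref{thm:main}, in particular local Lipschitzness of the driving vector field so that the Comparison Lemma argument in its proof still applies, carry over to the closed loop; this is precisely why local Lipschitz continuity of $u$ is required in the statement of the corollary.
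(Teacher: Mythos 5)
Your proof is correct and follows exactly the route the paper intends: the paper gives no separate proof of Corollary~\ref{cor:cbf}, describing it only as a ``straightforward application of Theorem~\ref{thm:main},'' and your argument supplies precisely the missing details---local Lipschitzness of the closed-loop vector field and verification that $B$ satisfies \eqref{eqn:Binequality}--\eqref{eqn:Bdotinequality} for $f+gu$.
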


\subsection{Zeroing Control Barrier Functions}

Def. \ref{def:extend} for ZBFs leads to the second type of control barrier function.

\begin{definition}\label{dfn:newcbf}
	Given a set $\mathcal{C} \subset \R^n$ defined by \eqref{eqn:superlevelsetC}-\eqref{eqn:superlevelsetC3} for a continuously differentiable function $h: \R^n \to \R$,  {\color{black}the function $h$ is called} a \emph{zeroing control barrier function (ZCBF)} defined on   set $\mathcal{D}$ with $\C\subseteq\mathcal{D}\subset \R^n$, if  there exists an extended class $\Kinfinity$ function $\alpha$ such that
	\begin{align}\label{ineq:ZCBF}
		& \sup_{u \in U}  \left[ L_f h(x) + L_g h(x) u + \alpha(h(x))\right] \geq 0,\;\forall x \in \mathcal{D}.
	\end{align}
	The ZCBF $h$ is said to be locally Lipschitz continuous if $\alpha$ and the derivative of $h$ are both locally Lipschitz continuous.
\end{definition}


Given a ZCBF $h$, for all $x\in\D$ define the set
\begin{equation}\label{zcbfinputset}
	\Kzcbf(x) =  \{ u \in U : L_f h(x) + L_g h(x) u + \alpha(h(x)) \geq 0\}. \nonumber
\end{equation}

Similar to Corollary \ref{cor:cbf}, the following result guarantees the forward invariance of $\mathcal{C}$.

\begin{corollary}\label{cor:zbf}
	Given a set $\mathcal{C} \subset \R^n$ defined by \eqref{eqn:superlevelsetC}-\eqref{eqn:superlevelsetC3} for a continuously differentiable function $h$, if $h$ is a ZCBF on $\D$, then any Lipschitz continuous controller $u: \mathcal{D} \to U$ such that $u(x) \in \Kzcbf(x)$ will render the set $\mathcal{C}$ forward invariant.
\end{corollary}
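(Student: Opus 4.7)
The plan is to reduce Corollary \ref{cor:zbf} to Proposition \ref{thm:GBF} by forming the closed-loop system and checking that $h$ inherits the ZBF property under the chosen feedback. More precisely, given a locally Lipschitz continuous $u:\mathcal{D}\to U$ with $u(x)\in\Kzcbf(x)$, define the closed-loop vector field
\begin{equation*}
\tilde{f}(x) := f(x) + g(x)\,u(x).
\end{equation*}
Since $f$ and $g$ are locally Lipschitz by the standing assumption on \eqref{eqn:controlsys}, and $u$ is locally Lipschitz by hypothesis, $\tilde{f}$ is locally Lipschitz on $\mathcal{D}$, so that the uniqueness and existence framework used throughout Section \ref{sec:controlbarrier} applies to $\dot{x}=\tilde{f}(x)$.

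Next, I would verify the ZBF condition \eqref{eqn:ZBFinequality} for $h$ with respect to $\tilde{f}$. By definition of $\Kzcbf(x)$, at every $x\in\mathcal{D}$ the selected input $u(x)$ satisfies $L_f h(x) + L_g h(x)\,u(x) + \alpha(h(x))\geq 0$. But by linearity of the Lie derivative in the vector field,
\begin{equation*}
L_{\tilde{f}} h(x) = L_f h(x) + L_g h(x)\, u(x),
\end{equation*}
so the inequality above rearranges to $L_{\tilde{f}} h(x) \geq -\alpha(h(x))$ for all $x\in\mathcal{D}$. Since $h$ is continuously differentiable and $\alpha$ is the same extended class $\Kinfinity$ function supplied by Def. \ref{dfn:newcbf}, this shows that $h$ is a ZBF for the closed-loop system $\dot{x}=\tilde{f}(x)$ on the set $\mathcal{D}\supseteq\C$.

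Finally, apply Proposition \ref{thm:GBF} to the closed-loop system to conclude that $\C$ is forward invariant under $\dot{x}=\tilde{f}(x)$, which is exactly the claim. The only subtlety I would flag is the regularity step: one must be certain that composing locally Lipschitz $u$ with the locally Lipschitz $f,g$ preserves well-posedness on $\mathcal{D}$, and that the ZBF inequality \eqref{eqn:ZBFinequality} was written pointwise on $\mathcal{D}$ (not just on $\C$) so that Proposition \ref{thm:GBF} can be invoked directly; both are already built into Def. \ref{dfn:newcbf}, so the argument is essentially a one-line reduction once the closed-loop reformulation is made explicit.
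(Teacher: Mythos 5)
Your proposal is correct and matches the paper's intended argument: the paper gives no explicit proof for this corollary, presenting it as a direct analogue of Corollary \ref{cor:cbf} (itself a ``straightforward application'' of the corresponding invariance theorem), which is precisely your reduction of forming the locally Lipschitz closed-loop field $f+gu$ and invoking Proposition \ref{thm:GBF}. The regularity and pointwise-on-$\mathcal{D}$ subtleties you flag are handled exactly as you say.
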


\begin{remark}
	Note that control \mbox{$u(x) \in \Krcbf(x)$} (or \mbox{$u(x) \in \Kzcbf(x)$}) will not necessarily render the closed-loop system of  \eqref{eqn:controlsys} forward complete, but only ensures that if  $x_0 \in \mathrm{Int}(\mathcal{C})$, then  $x(t) \in \mathrm{Int}(\mathcal{C})$ for all $t \in I_u(x_0)$. Here, $I_u(x_0)$ is the maximal time interval for the closed-loop system of \eqref{eqn:controlsys} with control $u(x) \in \Krcbf(x)$ (resp.  $u(x) \in \Kzcbf(x)$).
\end{remark}

\subsection{Higher Relative Degree}

In the preceding two subsections, if the function $h$ has a relative degree greater than 1, then $L_gh=0$ and the set $\Krcbf(x)$ or $\Kzcbf(x)$ trivially equals to $U$ or the empty set. When $h$ has a relative degree $r\ge2$,  the following proposition shows how to design a RCBF for $\C$.

\begin{proposition} Consider the control system \eqref{eqn:controlsys} with \mbox{$U=\R^m$}. Consider also a set $\C \subset \R^n$ defined by \eqref{eqn:superlevelsetC}-\eqref{eqn:superlevelsetC3} for a function $h$ with relative degree $r\ge2$, namely, $h$ is $r$-times continuously differentiable and $\forall~x\in \IntC$,  $L_gL_f^{k}h(x) =0$, for $0\le k \le r-2$, and  $L_gL_f^{(r-1)}h(x) \neq 0$. Then for any constant $H_{max}> 0$  and continuously differentiable function $H: \R\rightarrow \R_0^+$ satisfying
	\begin{align}
		&(i)\; 0 \le H(\lambda)\le H_{max},\; \forall~\lambda\in \R, \label{extpro1}\\
		&(ii)\; \frac{dH(\lambda)}{d \lambda}\neq 0,\; \forall~\lambda\in \R, \label{extpro3}
	\end{align}
	the function $B_r: \IntC \to \R_0^+$ defined by
	$$B_r:=\frac{1}{h}+H\circ L_f^{(r-1)}h$$
	is a RCBF.
\end{proposition}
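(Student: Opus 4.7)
The plan is to verify directly the two defining requirements in Definition \ref{def:CBF}: the Lyapunov-like sandwich bounds \eqref{eq:inequality} and the input-level inequality \eqref{eq:Binf}. Smoothness of $B_r$ on $\IntC$ is immediate, since $h(x)>0$ there makes $1/h$ continuously differentiable, $H$ is $C^1$ by hypothesis, and $L_f^{r-1}h$ is $C^1$ under the tacit smoothness of $f,h$ that is already required for a well-defined relative degree $r$.

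For the first step, I would use only the boundedness $0\le H\le H_{\max}$ to write
\begin{equation*}
\frac{1}{h(x)}\;\le\; B_r(x)\;\le\; \frac{1}{h(x)}+H_{\max}.
\end{equation*}
The lower bound is $1/\alpha_1(h(x))$ with $\alpha_1(r)=r$, while the upper bound equals $1/\alpha_2(h(x))$ with $\alpha_2(r)=r/(1+H_{\max}\,r)$. Since $\alpha_2$ is continuous, satisfies $\alpha_2(0)=0$, and has derivative $(1+H_{\max}r)^{-2}>0$, it is a class $\mathcal{K}$ function in the sense of the notation section (which does not demand unboundedness at infinity). This establishes \eqref{eq:inequality}.

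For the second step, I would compute
\begin{equation*}
L_g B_r(x)=-\frac{L_g h(x)}{h(x)^2}+H'\!\bigl(L_f^{r-1}h(x)\bigr)\,L_g L_f^{r-1}h(x).
\end{equation*}
The relative-degree hypothesis (the $k=0$ instance of $L_gL_f^{k}h=0$) kills the first term, leaving $L_g B_r(x)=H'(L_f^{r-1}h(x))\,L_gL_f^{r-1}h(x)$. Condition (ii) on $H$ forces $H'\neq 0$ everywhere, and the relative-degree hypothesis gives $L_gL_f^{r-1}h(x)\neq 0$ on $\IntC$, so $L_gB_r(x)$ is a nonzero row vector in $\R^{1\times m}$. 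Since $U=\R^m$ is the whole space, the scalar $u\mapsto L_gB_r(x)u$ is surjective and unbounded below, so for any class $\mathcal{K}$ function $\alpha_3$ (e.g.\ $\alpha_3(r)=r$) one has
\begin{equation*}
\inf_{u\in\R^m}\!\left[L_fB_r(x)+L_gB_r(x)u-\alpha_3(h(x))\right]=-\infty\;\le\;0,
\end{equation*}
which is exactly \eqref{eq:Binf}. Combining the two steps shows $B_r$ meets Definition \ref{def:CBF} and is therefore an RCBF.

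There is no single hard step; the only real subtlety worth flagging is that the upper Lyapunov bound inherits the boundedness of $H$, which would be fatal if the paper insisted on $\alpha_2$ being class $\mathcal{K}_\infty$ (unbounded) but is harmless for class $\mathcal{K}$ as actually defined. The role of conditions (i)--(ii) on $H$ is thus precisely to make the $1/h$ singularity dominate on $\partial\C$ while keeping $L_gB_r$ aligned with the already-nonvanishing $L_gL_f^{r-1}h$, so that unbounded $U$ trivializes \eqref{eq:Binf}.
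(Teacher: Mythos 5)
Your proposal is correct and follows essentially the same route as the paper's own proof: the same sandwich bounds with $\alpha_1(\xi)=\xi$ and $\alpha_2(\xi)=\xi/(1+H_{\max}\xi)$, and the same observation that $L_gB_r=\bigl(H'\circ L_f^{(r-1)}h\bigr)\,L_gL_f^{(r-1)}h\neq 0$ makes \eqref{eq:Binf} trivially satisfiable when $U=\R^m$. Your remark that $\alpha_2$ is bounded and hence only class $\mathcal{K}$ (not $\mathcal{K}_\infty$) is a worthwhile clarification that the paper glosses over, but it does not change the argument since the paper's definition of class $\mathcal{K}$ does not require unboundedness.
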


\begin{proof}
	For all $x \in \mathrm{Int}(\mathcal{C})$,
	$$
	\frac{1}{h(x)}\leq B_r(x)\leq \frac{1}{h(x)}+ H_{max}$$
	and thus
	$$
	\frac{1}{\alpha_1(h(x))}  \leq   B_r(x)    \leq \frac{1}{\alpha_2(h(x))},$$
	where $\alpha_1(\xi):=\xi$ and
	$$\alpha_2(\xi) := \begin{cases} 0 & \text{If}~\xi = 0 \\
	\frac{1}{ \frac{1}{\xi} + H_{max} } & \text{If}~ \xi >0
	\end{cases}  $$
	are both class $\Kinfinity$ functions. Thus, condition \eqref{eq:inequality} is satisfied.
	By the chain rule,
	\begin{align}
		L_g{B}_r=\left( \frac{dH}{d \lambda}\circ L_f^{(r-1)}h\right) \left(L_gL_f^{(r-1)}h\right).\label{dotB2}
	\end{align}
	Because $h$ has relative degree $r$ and \eqref{extpro3} holds, it follows that $B_r$  has relative degree one. Therefore, for any class $\Kinfinity$ function $\alpha_3$, and for any $x\in \IntC$, there exists $u\in \R^m$ such that $L_f B_r(x)+ L_g B_r(x) u \leq \alpha_3(1/B_r(x))$, and thus condition \eqref{eq:Binf} holds. Therefore, $B_r$ is a RCBF.
\end{proof}

An example for $H$ is \mbox{$H(\lambda)={\rm atan}(\lambda)+\frac{\pi}{2}\in(0,\frac{\pi}{2})$}, where $\frac{dH}{d \lambda}=\frac{1}{1+\lambda^2}\neq 0$ for any $\lambda$. Another means to construct RCBFs for $h$ with relative degree $r\ge2$ is given in \cite{hsubackstepping}, where a backstepping-inspired method for its construction is provided.

\begin{remark}
	For ZCBF $h$ with relative degree $r\ge 2$, replace  \eqref{extpro1} by there exists $H_{min}>0$, $H_{max}>0$ and
	\begin{equation}
		\label{extpro1b}
		H_{min} \le H(\lambda)\le H_{max},\; \forall~\lambda\in \R.
	\end{equation}
	Then for any $H(\lambda)$ satisfying \eqref{extpro1b} and \eqref{extpro3},  the function \mbox{$(H\circ L_f^{(r-1)})\cdot h$} is a ZCBF defined on $\C$. See also \cite{nguyen2016exponential} for an alternative approach.
\end{remark}

\begin{remark}	
	Note that if $U\neq\R^m$, i.e., there are constraints on the input $u$,
	then the construction shown above for higher relative degree $h$ may no longer be valid. Designing CBFs in this case remains an open question.
\end{remark}


\section{QPs for Mediating Safety and Performance}
\label{sec:lyapunovfunction}
In this section, we address the following question: how to select, among the control inputs that enforce the safety requirement, an input that also enforces liveness/stability?
We begin with a brief overview of exponentially stabilizing control Lyapunov functions in the context of nonlinear systems.  This formulation naturally leads to a quadratic program (QP) that allows for the unification of control Lyapunov functions for performance and control barrier functions for safety.

In the following, the dynamics of the system are given by a nonlinear affine control system of the form
\begin{eqnarray*}
	\left( \begin{array}{c} \dot{x}_1 \\\dot{x}_2\end{array}\right)&=& \underbrace{\left( \begin{array}{c} f_1(x_1,x_2) \\ f_2(x_1,x_2)\end{array}\right)}_{f(x)} + \underbrace{\left( \begin{array}{c} g_1(x_1,x_2) \\ 0\end{array}\right)}_{g(x)} u
\end{eqnarray*}
where $x_1 \in X\subset\R^{n_1}$ are controlled (or output) states, \mbox{$x_2 \in Z\subset\R^{n_2}$} are the uncontrolled states, with $n_1+n_2=n$, and $U\subset\R^m$ is the set of admissible control values for $u$.  In addition, we assume that $f_1(0,x_2) = 0$, i.e., that the zero dynamics surface $Z$ defined by $x_1 = 0$ with dynamics given by $\dot{x}_2 = f_2(0,x_2)$ is invariant, and we assume adequate smoothness assumptions on the dynamics so that solutions are well defined.


\subsection{Control Lyapunov Functions}


\begin{definition}\cite{AmGaGrSr:TAC12}
	\label{def:resclf}
	A continuously differentiable function  \mbox{$V : X \times Z\to \mathbb{R}$} is an exponentially stabilizing control Lyapunov function (ES-CLF) if there exist positive constants $c_1, c_2, c_3 > 0$ such that for all $x=(x_1,x_2) \in X \times Z$, the following inequalities hold,
	\begin{eqnarray}
		\label{eq:Vineq}
		&c_1 \| x_1\|^2 \leq V(x) \leq c_2 \|x_1 \| ^2,  &\\
		\label{eq:Vinf}
		&\inf_{u \in U} \left[ L_f V(x) + L_g V(x) u +  c_3 V(x) \right] \leq 0.&
	\end{eqnarray}
\end{definition}

The existence of an ES-CLF yields a family of controllers that exponentially stabilize the system to the zero dynamics \cite{FK:Book}.  In particular, consider the set
\begin{equation*}
	\Kclf(x) =  \{ u \in U : L_f V(x) + L_g V(x) u + c_3 V(x) \leq 0\}.
\end{equation*}
It follows that a locally Lipschitz controller $u:X \times Z \to U$ satisfies
\begin{equation*}
	u(x)  \in  \Kclf(x)  \Rightarrow \| x_1(t) \| \leq \sqrt{\frac{c_2}{c_1}} e^{- \frac{ c_3}{2} t} \| x_1(0) \|.
\end{equation*}

When  $U = \R^m$, Freeman and Kokotovic introduced the \textit{min-norm controller}, $u^\ast(x)$, defined pointwise as the element of $\Kclf(x)$ having minimum Euclidean norm \cite{FreemanSIAM96}.
The min-norm controller can be interpreted  as the solution of a quadratic program (QP). Importantly, by using the QP formulation, it is straightforward to include bounds on the control values \cite{Spong86,Kevin2015torque}, such as those given in \eqref{eq:Upolytope}, namely
\begin{align}
	\label{eqn:QPCLF}
	u^*(x) =   & \underset{u \in \R^{m}}{\operatorname{argmin}}   \quad \frac{1}{2}
	u^\top u\\
	\mathrm{s.t.} &  \quad L_fV(x) + L_gV(x) u \le - c_3 V(x) \nonumber\\
	& \quad A_0u\leq b_0.\nonumber
\end{align}
The QP-form of the controllers have been executed in real-time to achieve bipedal walking \cite{Kevin2015torque,YouTubeExp1} on a human-sized robot and on scale cars \cite{Aakar2015experiment}, with sample rates of 200 Hz to 1 kHz.

\subsection{Combining CLFs and CBFs via QPs}

A distinct advantage of the QP perspective is that it allows for the unification of control performance objectives (represented by CLFs) subject to the trajectories belonging to desired ''safe'' sets (as dictated by CBFs). By relaxing the constraint represented by the CLF condition \eqref{eq:Vinf}, and adjusting the weight on the relaxation parameter, the QP can mediate the tradeoff between performance and safety, with the safety being guaranteed.


Specifically, given a RCBF $B$ associated with a set $\C$ defined by \eqref{eqn:superlevelsetC}-\eqref{eqn:superlevelsetC3} and an ES-CLF $V$, they can be combined into a single controller through the use of a QP of the following form\footnote{In the following sections, only RCBFs are used to formulate the QPs; however, QPs incorporating ZCBFs can be formulated in a similar way \cite{Xu2015ADHS}.}
\begin{align}
	\label{eqn:QPCLFCBF}\tag{CLF-CBF QP}
	\ubold^*(\xbar) =   & \underset{\ubold = (u,\delta)\in \R^{m}\times \R}{\operatorname{argmin}}   \quad \frac{1}{2}
	\ubold^\top H(\xbar) \ubold + F(\xbar)^\top \ubold \\
	\mathrm{s.t.} &  \quad L_fV(\xbar) + L_gV(\xbar) u +c_3 V(\xbar) -\delta \leq 0, \label{QP:CLFconstraint}\\
	&  \quad L_f B(\xbar) + L_g B(\xbar) u -\alpha(h(\xbar))\leq 0,\label{QP:CBFconstraint}
\end{align}
where $c_3>0$ is a constant, $\alpha$ belongs to class $\mathcal{K}$,  $H(\xbar) \in \R^{(m +1) \times (m+1)}$ is positive definite, and $F(\xbar) \in \R^{m+1}$.

%
%

The following theorem\footnote{Note that while this theorem is established for ES-CLFs in this paper, the same results hold for classically defined CLFs as in \cite{FK:Book}.} provides a sufficient condition for $\ubold^*(\xbar)$ to be locally Lipschitz continuous in $\IntC$, thereby guaranteeing local existence and uniqueness of solutions to the closed-loop system, and the applicability of Corollaries \ref{cor:cbf} and \ref{cor:zbf}.

\begin{theorem}\label{thm:LocLip}
	Suppose that the following functions are all locally Lipschitz: the vector fields $f$ and $g$ in the control system \eqref{eqn:controlsys}, the gradients of the RCBF $B$ and CLF $V$, as well as the cost function terms $H(\xbar)$ and $F(\xbar)$ in \eqref{eqn:QPCLFCBF}. Suppose furthermore that the relative degree one condition, $L_g B(\xbar) \neq 0$ for all $x \in \IntC$, holds.
	Then the solution, $\ubold^*(\xbar)$, of \eqref{eqn:QPCLFCBF} is locally Lipschitz continuous for $\xbar \in \IntC$. Moreover, a closed-form expression can be given for $\ubold^*(\xbar)$.
\end{theorem}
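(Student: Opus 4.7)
The plan is to treat \eqref{eqn:QPCLFCBF} as a parametric quadratic program in $\ubold = (u,\delta)$ with parameter $x$. Because $H(x)$ is positive definite, the cost is strictly convex, so whenever the feasible set is nonempty the minimizer $\ubold^*(x)$ is unique. Feasibility holds throughout $\IntC$: the slack variable $\delta$ makes \eqref{QP:CLFconstraint} satisfiable for any choice of $u$, while the relative-degree-one assumption $L_g B(x) \neq 0$ guarantees that \eqref{QP:CBFconstraint} defines a nonempty half-space in $u$. Hence $\ubold^*(x)$ is well-defined on $\IntC$, and the problem reduces to analyzing how the minimizer depends on $x$.

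The approach is a KKT/active-set analysis. Write the two constraints as $a_i(x)^\top \ubold + b_i(x) \le 0$ for $i=1,2$, with $a_1(x) = (L_g V(x)^\top, -1)^\top$ and $a_2(x) = (L_g B(x)^\top, 0)^\top$. The KKT conditions consist of stationarity $H(x)\ubold + F(x) + \lambda_1 a_1(x) + \lambda_2 a_2(x) = 0$, primal feasibility, $\lambda_i \ge 0$, and complementary slackness. There are only four possible active sets; for each, I would solve the resulting linear system explicitly for $\ubold$ and the active multipliers, obtaining a rational expression in $H, F, f, g$ and the gradients of $V, B, h$. Since $H$ is positive definite and $a_2$ is nonzero, LICQ holds on every active set (the $-1$ in the $\delta$-slot of $a_1$ makes $\{a_1,a_2\}$ linearly independent whenever both constraints are active), so each linear KKT system is invertible. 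The Lipschitz hypotheses on $f,g,\nabla B,\nabla V,H,F$ then render each closed-form expression locally Lipschitz on the open region of $\IntC$ where its active set is optimal; this also produces the explicit formula for $\ubold^*(x)$ asserted in the theorem.

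The main obstacle is stitching these four pieces together across transitions between active sets, where some multiplier $\lambda_i$ crosses zero. At such a transition the two adjacent closed-form expressions coincide (setting $\lambda_i = 0$ in the formula with more active constraints recovers the formula with fewer), so the combined map $\ubold^*$ is at least continuous. Upgrading continuity to local Lipschitz continuity is the delicate step; the cleanest route is to invoke Hager's sensitivity theorem for strongly convex parametric QPs under LICQ, whose hypotheses have been verified above, directly yielding local Lipschitz continuity of $\ubold^*$ on $\IntC$. Alternatively, one can argue pointwise: near any $x_0 \in \IntC$ only two of the four candidate formulas can be optimal, both are locally Lipschitz in $x$, and they agree on the switching surface where the inactive multiplier vanishes, so the selection map is locally Lipschitz by a standard min/patching argument. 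Either variant closes the proof and delivers the closed-form expression for $\ubold^*(x)$.
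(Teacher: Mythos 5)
Your proposal is correct and follows essentially the same route as the paper: the paper recasts the QP as a minimum-norm problem in the $H(x)$-weighted inner product (following Luenberger), which leads to the same $2\times 2$ KKT/complementarity system you describe, solved in closed form over the possible active sets (with linear independence of the constraint normals guaranteed by the $-1$ in the $\delta$-slot), and then glued across switching surfaces by observing that uniqueness of the complementarity solution forces the piecewise formulas to agree on their common boundaries. Your suggested appeal to Hager's Lipschitz sensitivity theorem is a legitimate shortcut for the final patching step, but it is not needed and is not the route the paper takes.
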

\begin{proof}
	The proof is based on \cite{LuenbergerOptimizationVectorSpaceMethods}[Ch.~3], which as a special case includes minimization of a quadratic cost function subject to affine inequality constraints.
	
	Define
	\begin{align*}
		& y_1(\xbar)=[L_g V(\xbar),-1]^\top,\;p_1(\xbar)= -L_f V(\xbar)-c_3V(x),\\
		& y_2(\xbar)=[L_g B(\xbar),0]^\top,\;p_2(\xbar)=-L_f B(\xbar) +\alpha(h(\xbar)),
	\end{align*}
	and note that for all $\xbar\in \IntC$,  $y_1(\xbar)$ and $y_2(\xbar)$ are linearly independent in $\mathbb{R}^{m+1}$.
	Because $H(x)$ is locally Lipschitz continuous and positive definite, its inverse exists and is locally Lipschitz continuous. Define
	$$
	\begin{bmatrix} \bar{y}_1(\xbar), \bar{y}_2(\xbar) \end{bmatrix} = H(\xbar)^{-1} \begin{bmatrix} {y}_1(\xbar), {y}_2(\xbar) \end{bmatrix},
	$$
	$$
	\begin{bmatrix} \bar{p}_1(\xbar) \\ \bar{p}_2(\xbar) \end{bmatrix} =  \begin{bmatrix} {p}_1(\xbar) \\ {p}_2(\xbar) \end{bmatrix} -  \begin{bmatrix} {y}_1(\xbar)^\top \\  {y}_2(\xbar)^\top \end{bmatrix} \bar{\ubold}(\xbar),
	$$
	and
	\begin{align*}
		\bar{\ubold}(\xbar)&:= -H(\xbar)^{-1}F(\xbar)\\
		\vbold &:=  \ubold-\bar{\ubold}(\xbar).
	\end{align*}
	Finally, let$\left<\cdot, \cdot \right>$ define an inner product on $\mathbb{R}^{ m+1}$ with weight matrix $H(x)$ so that
	$
	\left< \vbold,  \vbold \right>: = \vbold^\top H(\xbar)\vbold.
	$

	The optimization problem \eqref{eqn:QPCLFCBF} is then equivalent to
	\begin{align}
		\vbold^*(\xbar) =   & \underset{\vbold \in \R^{m+1}}{\operatorname{argmin}}  \left<\vbold, \vbold \right> \label{eqn:QPCLFCBF:equivalent}\\
		\mathrm{s.t.} &  \left<\bar{y}_1(\xbar),  \vbold \right>  \le \bar{p}_1(\xbar), \nonumber \\
		& \left<\bar{y}_2(\xbar),  \vbold \right> \le \bar{p}_2(\xbar), \nonumber
	\end{align}
	with
	\begin{align}
		\label{eqn:Urelated2V}
		\ubold^*(\xbar)&=\vbold^*(\xbar) + \bar{\ubold}(\xbar).
	\end{align}
	
	From  \cite{LuenbergerOptimizationVectorSpaceMethods}[Ch.~3], the solution to \eqref{eqn:QPCLFCBF:equivalent} is computed as follows. Let $G(\xbar)=[G_{ij}(\xbar)]=[\langle \bar{y}_i(\xbar), \bar{y}_j(\xbar)\rangle]$, $i,j=1,2$ be the Gram matrix. Due to the linear independence of $\{\bar{y}_1(\xbar), \bar{y}_2(\xbar)\}$,  $G(\xbar)$ is positive definite. The unique solution to \eqref{eqn:QPCLFCBF:equivalent} is
	\begin{equation}
		\label{eqn:Lambdas}
		\vbold^*(\xbar) = \lambda_1(\xbar)  \bar{y}_1(\xbar) +  \lambda_2(\xbar)  \bar{y}_2(\xbar) ,
	\end{equation}
	where $\lambda(\xbar)=[\lambda_1(\xbar), \lambda_2(\xbar)]^\top$ is the unique solution to
	\begin{align}
		\label{eqn:LuenbrgerFormulation}
		G(\xbar) \lambda(\xbar) & \le \bar{p}(\xbar), \nonumber \\
		\lambda(\xbar) & \le 0,\\
		[G(\xbar) \lambda(\xbar)]_i & < \bar{p}_i(\xbar) ~ \Rightarrow~\lambda_i(\xbar)=0, \nonumber
	\end{align}
	where $[\cdot]_i$ denotes the $i$-th row of the quantity in brackets, and the inequalities hold componentwise.
	Because $G(\xbar) $ is $2 \times 2$, a closed form solution can be given. Define the Lipschitz continuous function
	$$
	\omega(r)=\left\{\begin{array}{rl}
	0,& \mbox{if}\;r>0, ~\\
	r,&\mbox{if}\; r\leq 0.
	\end{array} r\in \mathbb{R}. \right.
	$$
	For $\xbar\in\IntC$,
	$\lambda_1 ,  \lambda_2$ can be expressed in closed form as\newline
	\noindent \textbf{If:} $G_{21}(\xbar) \omega (\bar{p}_2(\xbar))-G_{22}(\xbar)\bar{p}_1(\xbar)<0,$
	\begin{align}
		\label{eqn:IfCondition}
		\left[  \begin{array}{c}
			\lambda_1(\xbar) \\
			\lambda_2(\xbar)
		\end{array} \right] = \left[  \begin{array}{c}
		0\\
		\frac{\omega(\bar{p}_2(\xbar))}{G_{22}(\xbar)}
	\end{array} \right],
\end{align}
\noindent \textbf{Else if:} $G_{12}(\xbar) \omega (\bar{p}_1(\xbar))-G_{11}(\xbar)\bar{p}_2(\xbar)<0,$
\begin{align}
	\label{eqn:ElseIfCondition}
	\left[  \begin{array}{c}
		\lambda_1(\xbar) \\
		\lambda_2(\xbar)
	\end{array} \right] = \left[  \begin{array}{c}
	\frac{\omega(\bar{p}_1(\xbar))}{G_{11}(\xbar)} \\
	0
\end{array} \right],
\end{align}

\noindent \textbf{Otherwise:}
\begin{align}
	\label{eqn:OtherwiseCondition}
	\left[  \begin{array}{c}
		\lambda_1(\xbar) \medskip \\
		\lambda_2(\xbar)
	\end{array} \right] = \left[  \begin{array}{c}
	\frac{\omega(G_{22}(\xbar)\bar{p}_1)(\xbar)-G_{21}(\xbar)\bar{p}_2(\xbar))}
	{G_{11}(\xbar)G_{22}(\xbar)-G_{12}(\xbar)G_{21}(\xbar)} \medskip \\
	\frac{\omega(G_{11}(\xbar)\bar{p}_2(\xbar)-G_{12}(\xbar)\bar{p}_1(\xbar))}
	{G_{11}(\xbar)G_{22}(\xbar)-G_{12}(\xbar)G_{21}(\xbar)}
\end{array} \right].
\end{align}

Because the Gram matrix is positive definite, for all $\xbar\in\IntC$, $G_{11}(\xbar)G_{22}(\xbar)-G_{12}(\xbar)G_{21}(\xbar)>0$. Using standard properties for the composition and product of locally Lipschitz continuous functions, each of the expressions in \eqref{eqn:IfCondition} -\eqref{eqn:OtherwiseCondition} is locally Lipschitz continuous on $\IntC$. Hence, the functions $\lambda_1(\xbar)$ and $\lambda_2(\xbar)$ are locally Lipschitz on each domain of definition and have well defined limits on the boundaries of their domains of definition relative to $\IntC$. If these limits agree at any point $\xbar$ that is common to more than one boundary, then $\lambda_1(\xbar)$ and $\lambda_2(\xbar)$ are locally Lipshitz continuous on $\IntC$. However, the limits are solutions to  \eqref{eqn:LuenbrgerFormulation}, and solutions to \eqref{eqn:LuenbrgerFormulation} are unique  \cite{LuenbergerOptimizationVectorSpaceMethods}. Hence the limits agree at common points of their boundary\footnote{As an example, the only non-zero solutions of \eqref{eqn:LuenbrgerFormulation} occur when $p_2(\xbar)<0$, in which case,
	$G_{21}(\xbar)p_2(\xbar)-G_{22}(\xbar)p_1(\xbar)=0,$
	and  therefore \eqref{eqn:OtherwiseCondition} reduces to \eqref{eqn:IfCondition}. The other cases are similar.} (relative to $\IntC$) and the proof is complete.
\end{proof}


If the control objective and the barrier function do not conflict, such as when the zero dynamics surface of the CLF has a non-empty intersection with the safe set, an appropriate choice of weights results in a solution of the QP with $\delta\approx 0$  \cite{Xu2015ADHS}. The mediation of safety and performance will be illustrated in the context of the adaptive cruise control and lane keeping problems in the following sections. The examples will also provide explicit control barrier functions that respect constraints on the inputs, such as those given in \eqref{eq:Upolytope}. In particular, the examples will add a constraint of the form
\begin{equation}
	\label{eqn:InputBoundForQP}
	A_0u - b_0 \le 0
\end{equation}
to the QP, in addition to \eqref{QP:CLFconstraint} and \eqref{QP:CBFconstraint}.
By construction, at each point of the safe set, there will exist a solution of the QP satisfying all three constraints. The Lipschitz continuity of the QP with the additional constraint \eqref{eqn:InputBoundForQP} on the inputs, however, is not currently assured.

\section{Two Automotive Safety Problems via QPs}
\label{sec:accviaqp}
In this section, we use Adaptive Cruise Control (ACC) and Lane Keeping (LK) to illustrate the power of a CLF-CBF-based QP to meet a performance objective, subject to a safety requirement.

\subsection{Adaptive Cruise Control Via QPs}
\label{sec:ACCencodingConstraints}
A vehicle equipped with ACC seeks to converge to and maintain a fixed cruising speed, as with a common cruise control system. Converging to and maintaining fixed speed is naturally expressed as asymptotic stabilization of a set. With ACC, the vehicle must in addition guarantee a safety condition, namely, when a slower moving vehicle is encountered, the controller must automatically reduce vehicle speed to maintain a guaranteed lower bound on time headway or following distance, where the distance to the leading vehicle is determined with an onboard radar. When the leading car speeds up or leaves the lane, and there is \textit{no longer a conflict between safety and desired cruising speed}, the adaptive cruise controller automatically increases vehicle speed. The time-headway safety condition is naturally expressible as a control barrier function. Because relaxation is used to make the stability objective a soft constraint in the QP, while safety is maintained as a hard constraint, safety and stability do not need to be simultaneously satisfiable. In contrast, the approach of \cite{Romd2014CDCunitiing} for combining CBFs and CLFs is only applicable when the two objectives can be simultaneously met. Simulations in Sect.~\ref{sec:simulation} will illustrate how the QP-based solution to ACC automatically adjusts vehicle speed under various traffic conditions.

\subsubsection{ACC problem setup}
We begin by setting up the dynamics of the problem based upon \cite{ioannou1993autonomous} and \cite{liang2000string}, which assume that the \textit{lead} and \text{following} vehicles are modeled as point-masses moving in a straight line. The following vehicle is the one equipped with ACC while the lead vehicle acts as a disturbance to the following vehicle's objective of cruising at a given constant speed.

The dynamics of the system can be compactly expressed as
\begin{align}\label{eqn:fgdynamics}
	\dot{x} &= \underbrace{\left( \begin{array}{c} -\Fr/M\\a_L\\x_2-x_1\end{array}\right)}_{f(x)} + \underbrace{\left( \begin{array}{c} 1/M\\0\\0\end{array}\right)}_{g(x)} u.
\end{align}
Here, $x = (x_1,x_2,x_3):=(v_f,v_l,D)$ where $v_f$ and $v_l$ are the velocity of the following and leading vehicle  (in $m/s$), respectively, $D$ is the distance between the two vehicles (in $m$); $M$ is the mass of the following vehicle (in $kg$);  $\Fr(x)=f_0 + f_1 v_f + f_2 v_f^2$ is the aerodynamic drag (in N) with constants $f_0$, $f_1$ and $f_2$ determined empirically; $a_L\in[-a_lg,a'_lg]$ is the overall acceleration/deceleration of the lead vehicle (in $m/s^2$) with $a_l,a_l'$ fractions of the gravitational constant $g$ for deceleration and acceleration, respectively; $u \in U\subset\R$, the control input of the following car, is wheel force (in N). Initially, we will suppose that the control input is unbounded, that is, $U=\R$, and later, we address realistic bounds on wheel force.

Given the model \eqref{eqn:fgdynamics}, we next present two constraints that are necessary in the context of ACC.

\newsec{Soft Constraints.} In the context of ACC, when adequate headway is assured, the goal is to achieve a desired speed, $v_d$.  In other words,
\begin{align}
	\tag{SC}
	\label{eqn:controlobj}
	\mathrm{Performance:} \quad  \lim_{t \to \infty} v_f(t)=v_d.
\end{align}
This translates into a soft constraint since this speed should only be achieved in the case when safety can be assured. In terms of a candidate CLF, the soft constraint \eqref{eqn:controlobj} can be written
\begin{align}
	\mathrm{Performance:} \quad  V(x): = (v_f- v_d)^2.\nonumber
\end{align}
Straightforward calculations given in \cite{aaroncbfcdc14} show that for any $c>0$, the following inequality holds,
$$
\inf_{u \in \R} \left[ L_f V(x) + L_g V(x) u + c V(x) \right] \leq 0,
$$
verifying that $V$ is a valid CLF.

\newsec{Hard Constraints.}  These represent constraints that must not be violated under any condition.  For ACC, this is simply the constraint: {\it ``keep a safe distance from the car in front of you''}.  There are numerous formulations of this concept including Time Headway and Time to Collision \cite{Vogel}.  In the context of this paper, to start with a simple formulation, we express this constraint as
\begin{align}
	\label{eqn:hard_constraint}
	\tag{HC}
	\mathrm{Safety:} \quad D/v_f \geq \tau_{d},
\end{align}
where $\tau_{d}$ is the desired time headway.\footnote{
	A general rule stated in \cite{Vogel} is that the minimum distance between two cars is ``half the speedometer''.  This translates into the hard constraint as $D\geq 1.8 v_f$ with $\tau_{d}=1.8$.}

The constraint \eqref{eqn:hard_constraint} can be rewritten as
\begin{align}
	\label{eqn:ACC_spec_constraint2}
	D - \tau_{d} v_f\geq 0,
\end{align}
for the dynamics \eqref{eqn:fgdynamics}. Correspondingly, we consider the function $h(x) =D- \tau_{d}v_f$, which yields the admissible set $\C$ as defined in \eqref{eqn:superlevelsetC}-\eqref{eqn:superlevelsetC3}.

A candidate  RCBF  can be constructed from $h$ as follows
\begin{align}\label{eqn:forceRCBF}
	B=- \log \left(\frac{h}{1 + h} \right).
\end{align}
Because $D-\tau_{d}v_f>0$ for any $x\in\mathrm{Int}(\mathcal{C})$, it follows that
$$
L_gB(x)=\frac{\tau_{d}}{M(1 +D- \tau_{d}v_f)(D-\tau_{d}v_f)}>0,
$$
which implies that $B$ has relative degree 1 in $\mathrm{Int}(\mathcal{C})$. If  the class $\mathcal{K}$ function $\alpha_3$ in \eqref{eq:Binf} is chosen as $\gamma/B$ for some constant $\gamma>0$, then
$$
u(x) = -\frac{1}{L_g B(x)} \left(L_f B(x) - \frac{\gamma}{ B(x)} \right)
$$
provides a specific example of a $u \in \R$ satisfying
\begin{align}
	\label{eq:Binf2}
	\inf_{u \in \R} & \left[ L_f B(x) + L_g B(x) u - \frac{\gamma}{ B(x)}  \right] \leq 0.
\end{align}
As a result, $B$ is a valid RCBF for $U=\R$.

\begin{figure*}[!bt]
	\centering
	\includegraphics[width=\textwidth]{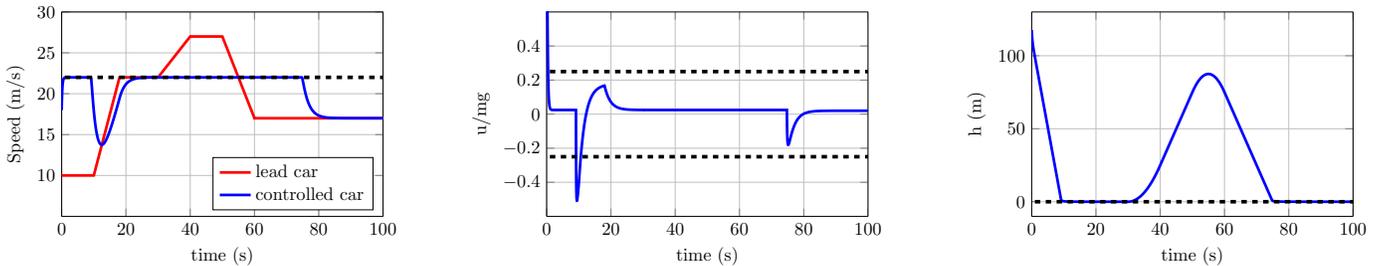}\\
	\caption{Simulation results of the ACC problem based on (ACC QP) (left) speed of the lead car and the controlled car with the desired speed $v_d$ indicated (middle)  vehicle acceleration as fractions of $g$, with typical desired  upper and lower bounds indicated  (right)  hard constraint \eqref{eqn:hard_constraint}, where positive values indicate satisfaction.}\label{fig:nobound}
\end{figure*}

\subsubsection{The CLF-CBF based QP}
\label{sec:QPACC}
As in \cite{AMPO13}, a CLF-CBF QP is constructed by combining the above constraints in the form
\begin{align}\label{eqn:accQP}
	\tag{ACC QP}
	\ubar^*(x) &= \underset{\ubar =[u,\delta]^\top \in  \R^2}{\operatorname{argmin}}   \frac{1}{2}
	\ubar^\top H_{\mathrm{acc}} \ubar + F_{\mathrm{acc}}^\top \ubar \\\nonumber
	\mathrm{s.t.} &  \quad A_{\mathrm{clf}} \ubar \leq b_{\mathrm{clf}}, \\\nonumber
	&  \quad A_{\mathrm{cbf}}  \ubar \leq b_{\mathrm{cbf}},
\end{align}
where
\begin{align}
	\label{eqn:softconstraints1}
	&A_{\mathrm{clf}} = \left[ L_gV(x) , -1 \right],\;
	b_{\mathrm{clf}} = - L_fV(x) - c V(x),
\end{align}
and
\begin{align}
	\label{eqn:hard_constraint2}
	& A_{\mathrm{cbf}} = \left[L_g B(x), 0 \right],\quad
	b_{\mathrm{cbf}}=  - L_f B(x)  + \frac{\gamma}{ B(x)}.
\end{align}

\begin{remark}  Setting $\delta= 0$ would make the CLF constraint ``hard'' in that it would force exact exponential convergence at a rate of $c$, and in such a case, if there were no inputs satisfying both the CLF constraint and the RCBF constraint, the QP would be infeasible.
\end{remark}

The cost function in the QP is selected in view of achieving the control objective encoded in the CLF, i.e., achieving the desired speed, subject to balancing the relaxation factors that ensure solvability and continuity of  \eqref{eqn:accQP}.  As explained in \cite{aaroncbfcdc14}, the CLF was constructed by first partially linearizing the system through the feedback
$u = \Fr + M \mu.$
As a result, the cost relative to this control will be chosen as $\mu^\top \mu$, which yields the following function in $u$:
$$
\mu^\top \mu = \frac{1}{M^2} \left( u^\top u - 2 u^\top \Fr + \Fr^2 \right).
$$
This can then be converted into the form
\begin{align}\label{QPcost}
	H_{\mathrm{acc}} = 2 \left[ \begin{array}{cc} \frac{1}{M^2} & 0 \\ 0 & p_{sc}  \end{array} \right] , \:\:
	F_{\mathrm{acc}} =  -2 \left[ \begin{array}{c} \frac{\Fr }{M^2}  \\ 0    \end{array} \right].
\end{align}
Here $p_{sc}$ is the weight for the relaxation $\delta$.

\newsec{Simulation Preview.}
Simulation results obtained by applying the \eqref{eqn:accQP} controller are developed in Section \ref{sec:simulation}. A sneak preview is shown in Fig. \ref{fig:nobound} to highlight a few properties of the designed controller and to motivate an important refinement. The desired cruising speed $v_d$ is set to $79.2$ (km/h), which is $22$ (m/s). The system \eqref{eqn:fgdynamics} is initialized at $(v_f(0),v_l(0),D(0))=(18,10,150)$.
The left plot in Fig \ref{fig:nobound} shows the desired cruising speed as a thick dashed line and the speeds of the lead and controlled vehicles as thin lines. The controlled vehicle achieves the desired cruising speed when it does not conflict with the time-headway constraint, and it settles to the speed of the lead car when required to maintain a safe following distance. The forward invariance of the safe set, defined by the hard constraint \eqref{eqn:hard_constraint} encoding the desired time headway $\tau_{d}$, is shown in the right plot of Fig. \ref{fig:nobound}. The middle plot shows typical ``comfort'' limits on acceleration that should be respected by the controlled vehicle, which are violated because no constraint has been imposed on the wheel force that can be requested by the QP when the car accelerates and brakes. This motivates the development of a refined barrier function that is compatible with bounds on the two vehicles' inputs.

\subsubsection{Force Constraints and CBFs}\label{subsec:modifedcbf}
The QP formulated in subsection \ref{sec:QPACC} generates a control input $u\in \R$ for the ACC-controlled vehicle. In practice, however, the wheel force that can be generated by the car is constrained by physical limits (e.g., the maximal engine torque for acceleration, maximal braking capability, and road conditions). This requires the admissible set $U$ to be bounded.  Furthermore, to guarantee driver comfort, the wheel forces the controller is allowed to apply are typically much less than the maximal forces that can be generated by the vehicle.

\newsec{Force Constraints.} We now constrain the allowable wheel forces. Supposing that we do not want to accelerate or decelerate more than some fraction of $g$, the gravitational constant, we can write the constraints on acceleration and deceleration as inequalities
\begin{align}
	\label{eqn:comfort_constraint}
	\tag{FC}
	& u  \leq a'_f M g, \quad -u  \leq a_f M g.
\end{align}
where $a_f$ and $a'_f$ are the fractions of $g$ for deceleration and acceleration, respectively. That is, the input set is now: 
\begin{align*}
	U_{acc}:=[-a_f M g,a'_f M g].
\end{align*}

Since it may be the case that these constraints will conflict with the torque values needed to satisfy the hard constraint \eqref{eqn:hard_constraint}, we require a force-based barrier function allowing the hard constraints and force constraints to be simultaneously satisfied.  In particular, we seek a function $h_F(x)$ such that for all $x\in\C_F$, where $\C_F = \{ x~| h_F(x)\geq 0 \}$,
there exists a trajectory of \eqref{eqn:fgdynamics} satisfying \eqref{eqn:hard_constraint} and the maximum braking limit \eqref{eqn:comfort_constraint}. That is to say, within the set $\C_F$, the ACC-equipped car can \textit{always brake to maintain a desired headway using an allowed amount of deceleration}.

In the appendix, two CBFs\footnote{The functions are piecewise defined by a set of continuously differentiable functions; more details are given in  \cite{xu2016composition}.}, $h_F^c$ and $h_F^o$,  that can be used to define the safe set $\C_F$ are developed. 
The function $h_F^c$ has a much simpler form\footnote{When the speed of the lead car is constant (i.e., $a_L=0$), and $v>v_l$, then $h_F^c$ reduces to the formula given in \cite{aaroncbfcdc14}.} than $h_F^o$, but makes a more conservative approximation of the safe set than $h_F^o$. When rolling resistance is ignored in the model \eqref{eqn:fgdynamics}, $h_F^o$ provides the maximal safe set compatible with \eqref{eqn:hard_constraint2} and the force bounds \eqref{eqn:comfort_constraint}, and will therefore be referred to as ``optimal''. The functions $h_F^o$ and $h_F^c$ in turn define the \emph{optimal RCBF} $B_F^o$ and the \emph{conservative RCBF} $B_F^c$, respectively,  using \eqref{eqn:Bmotivation} or \eqref{eqn:barrierfunc2}. The force-based hard constraints are ultimately expressed via \eqref{eqn:comfort_constraint} together with the condition
\begin{align}
	\tag{FCBF}
	\label{eqn:comfort_constraint3}
	L_f B_F(x) + L_g B_F(x) u - \frac{\gamma}{ B_F(x)} \leq 0.
\end{align}

\subsubsection{Modified CLF-CBF Based QP}\label{sec:modifiedQP}
Incorporating \eqref{eqn:comfort_constraint3} and \eqref{eqn:softconstraints1}, we have the modified force-based CLF-CBF QP:
\begin{align}
	\label{eqn:accQP2}
	\tag{ACC-QP2}
	\ubar^*(x) &= \underset{\ubar = \left[ u ,\delta\right]^\top \in  U_{acc}\times\R}{\operatorname{argmin}}\; \frac{1}{2}
	\ubar^\top H_{\mathrm{acc}} \ubar + F_{\mathrm{acc}}^\top \ubar \\
	\nonumber
	\mathrm{s.t.} &  \quad A_{\mathrm{clf}} \ubar \leq b_{\mathrm{clf}}, \\
	\nonumber&  \quad A_{\mathrm{fcbf}}  \ubar \leq b_{\mathrm{fcbf}}, \\
	\nonumber &  \quad  A_{\mathrm{fc}}  \ubar \leq b_{\mathrm{fc}}.
\end{align}



\begin{table*}[hb]
	\begin{equation}\label{eqn:LKfourstate}
		\left[  \begin{array}{c} \dot{y} \\ \dot{\nu} \\ \dot{\psi} \\ \dot{r} \end{array} \right]
		=  \left[  \begin{array}{cccc} 0 & 1 & v_0 & 0 \\
			0 & -\frac{ C_f + C_r }{ Mv_0 } & 0 & \frac{b C_r - a C_f }{ Mv_0 } - v_0 \\
			0 & 0 & 0 & 1  \\
			0 & \frac{b C_r - a C_f }{ I_{z} v_0 } & 0 & -\frac{a^2 C_f + b^2 C_r }{ I_{z} v_0 }
		\end{array} \right]
		\left[  \begin{array}{c} y \\ \nu \\ \psi \\ r \end{array} \right]
		+
		\left[  \begin{array}{c} 0 \\ \frac{C_f }{M} \\ 0 \\ a \frac{C_f }{I_{z}} \end{array} \right]  u+  \left[  \begin{array}{r} 0 \\ 0 \\ -1 \\ 0 \end{array} \right] r_{d}
	\end{equation}
\end{table*}

The soft constraints yield the same $A_{\mathrm{clf}},b_{\mathrm{clf}}$ as \eqref{eqn:softconstraints1}.
The comfort constraints in \eqref{eqn:comfort_constraint} yields $A_{\mathrm{fc}},b_{\mathrm{fc}}$ as
\begin{eqnarray}
	A_{\mathrm{fc}} = \left[ \begin{array}{cc} 1  & 0  \\  -1  & 0  \end{array} \right], \quad b_{\mathrm{fc}} = \left[ \begin{array}{c}  a'_f M g \\  a_f M g \end{array} \right].
\end{eqnarray}

By condition \eqref{eqn:comfort_constraint3}, the force constraints yield 
\begin{align}
	A_{\mathrm{fcbf}} &=\left[ L_g B_F(x), 0 \right], \quad
	b_{\mathrm{fcbf}} = - L_f B_F(x)  + \frac{\gamma}{ B_F(x)}.\nonumber
\end{align}

The cost function is the same as in \eqref{QPcost}. Simulation results obtained by applying the \eqref{eqn:accQP2} controller and its comparison with the \eqref{eqn:accQP} controller are provided in Section \ref{sec:simulation}.

\subsection{Lane Keeping Via QPs}
In this subsection, we consider the Lane Keeping (LK) problem using a CBF-based QP, which aims to keep the vehicle ``centered'' in a possibly curved lane. We focus on the lateral movement of the vehicle by assuming that the vehicle has a constant longitudinal speed \cite{GerdLanePotential06}.

\subsubsection{Lane Keeping Problem Setup}
Under the assumptions of constant longitudinal speed and a linear tire-force model, a two-state handling model can be augmented to the four-state lateral-yaw model given in \eqref{eqn:LKfourstate}  \cite{GerdLanePotential06}.
	In the model, the state  is $x:=(y,\nu,\psi,r)$, where $y$ and $\psi$  are the lateral displacement and the error yaw angle in road fixed coordinates, respectively, $\nu$ is the lateral velocity, and $r$ is the yaw rate (rotation rate about the vertical axis). The input $u$ is the steering angle of the front tires, and the disturbance is the desired yaw rate $r_d$, which  can be calculated from the curvature of the road by $r_d=\frac{v_0}{R}$, where $v_0$ is a constant value for the longitudinal velocity and $R$ is the road radius of curvature. Additionally,  $M$ is the total mass of the car, $I_{z}$ is the moment of inertia about the center of mass,   $a$ and $b$ are the distance from the center of mass to the front and rear tires, respectively, and $C_r$ and $C_f$ are tire (stiffness) parameters.

The objective of the LK problem is to provide a steering input that keeps the car ``centered'' in the lane. Particularly, the car should satisfy the following hard control objective and the acceleration constraint.

\newsec{Hard Constraint.}
This constraint requires the displacement of the vehicle from the center of the lane to be less than a given constant $y_{max}$:
\begin{align}
	\label{eqn:hard_constraint_LK}
	\tag{LK-HC}
	|y|\leq y_{max}.
\end{align}
Since the width of US lanes is 12 feet and typical width of a car is 6 feet, we can take $y_{max}$ to be 3 feet, which is approximately 0.9 meters. Therefore, the hard constraint is $|y|\leq 0.9$.

\newsec{Acceleration Constraint.}
Due to the force limit of the car and for the comfort of the driver, the lateral acceleration needs to be bounded. We express this constraint as
\begin{align}
	\label{eqn:acce_constraint_LK}
	\tag{LK-FC}
	|\ddot{y}|\leq a_{max}.
\end{align}

\subsubsection{Encoding LK Constraints}
The hard and acceleration constraints can be encoded formally as follows.

\newsec{Encoding Acceleration Constraint.}
Since
\begin{align}\label{eqn:LateralForceBalance}
	M \ddot{y}:&=C_{f}(u-\frac{\nu+ar}{v_0})-C_{r}\frac{\nu-br}{v_0}-Mv_0r_d,
\end{align}
the acceleration constraint \eqref{eqn:acce_constraint_LK} is equivalent to
\begin{align*}
	u\in U_{lk}:&= \left[\frac{1}{C_{f}}(-Ma_{max}+F_0),\frac{1}{C_{f}}(Ma_{max}+F_0) \right]
\end{align*}
where $F_0=C_{f}\frac{\nu+ar}{v_0}+C_{r}\frac{\nu-br}{v_0}+Mv_0r_d$.

\newsec{Encoding Hard Constraint.}
Suppose that at time 0, the lateral displacement is $y(0)$ and the lateral velocity is $\dot{y}(0)$. Under the maximal allowable acceleration, it takes time $T$ for the lateral speed to be reduced to zero, where $T=\frac{|\dot{y}(0)|}{a_{max}}$. Then,
\begin{align*}
	y(T)&=y(0) +  T \dot{y}(0) - \frac{{\rm sgn} (\dot{y}(0))}{2} T^2 a_{max}\\
	&=y(0) + \frac{1}{2}\frac{|\dot{y}(0)|}{a_{max}}\dot{y}(0).
\end{align*}
%
Motivated by the above formula, define
\begin{align}
	h_F(x)&=\left(y_{max}-{\rm sgn}(\dot{y})~y\right)- \frac{1}{2} \frac{\dot{y}^2}{a_{max}}\label{eqn:CBFLK}
\end{align}
and $\C_F:= \{x| h_F(x)\geq 0 \}$.
Then, for every $x \in \C_F$, the controlled vehicle
can remain in $\mathcal{C}_F$ while keeping the acceleration within the allowable set \eqref{eqn:acce_constraint_LK}.
Indeed, differentiating  \eqref{eqn:CBFLK} for $\dot{y}\neq 0$ yields
\begin{align}
	\dot{h}_F(x,u)&=- \left( {\rm sgn}(\dot{y}) + \frac{\ddot{y}}{a_{max}} \right)\dot{y}.\label{eqn:CBFLKdot}
\end{align}
It follows that if $\dot{y}>0$, then $\dot{h}_F(x,u)\ge 0$ when $u=\frac{1}{C_{f}}(-Ma_{max}+F_0)$, and if $\dot{y}<0$, then $\dot{h}_F(x,u)\ge 0$ when $u=\frac{1}{C_{f}}(Ma_{max}+F_0)$. Finally, from \eqref{eqn:CBFLK}, the limit of $\dot{h}_F$ as $\dot{y}$ tends to zero is well defined and equals zero.
Taking $B_F$ as \eqref{eqn:Bmotivation} and $\gamma>0$ a constant, the above calculations imply that RCBF condition \eqref{eq:Binf} holds, namely, for any $x \in \mathrm{Int}(\C_F)$, there exists $u$ such that
\begin{align}
	\tag{LK-FCBF}
	\label{eqn:comfort_constraint_LK}
	L_f B_F(x) + L_g B_F(x) u - \frac{\gamma}{ B_F(x)} \leq 0,
\end{align}
and therefore $\mathrm{Int}(\C_F) $ is controlled invariant.


The next proposition relates the controlled invariance of $\mathrm{Int}(\C_F) $  to the lane condition, \eqref{eqn:hard_constraint_LK}.
\begin{proposition}\label{thm:LKinvariantSet}
	Let $\C_{LK}$ be the subset of $\mathrm{Int}(\C_F) $ where the hard constraint \eqref{eqn:hard_constraint_LK} is satisfied, namely,
	\begin{align}
	\C_{LK}:=& \mathrm{Int}(\C_F)  \cap \{x:|y|\leq y_{max}\} .\label{eqn:SafeSetLK}
	\end{align}
	Then any feedback controller for \eqref{eqn:LKfourstate} that renders $\mathrm{Int}(\C_F)$ forward invariant also renders $\C_{LK}$ forward invariant.
\end{proposition}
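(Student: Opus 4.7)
The plan is to obtain forward invariance of $\C_{LK}$ by combining the assumed invariance of $\mathrm{Int}(\C_F)$ with a Nagumo-type inward-pointing condition on the two flat faces $\{y=y_{max}\}$ and $\{y=-y_{max}\}$. In other words, once $\mathrm{Int}(\C_F)$ is known to be invariant under the given closed-loop feedback, I only need to certify that along the closed-loop vector field $\dot y \le 0$ whenever $y=y_{max}$ and $\dot y\ge 0$ whenever $y=-y_{max}$; then Nagumo's theorem applied to each face gives invariance of $\{|y|\le y_{max}\}$ along trajectories that remain in $\mathrm{Int}(\C_F)$, which is exactly the assertion of the proposition.

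First, I would read off these boundary conditions directly from the explicit form of $h_F$ in \eqref{eqn:CBFLK}. At any state with $y=y_{max}$ and $\dot y>0$, the formula gives $h_F=-\tfrac{1}{2}\dot y^2/a_{max}<0$, so such states are not in $\mathrm{Int}(\C_F)$; since the closed-loop feedback is assumed to keep the trajectory in $\mathrm{Int}(\C_F)$, one concludes $\dot y\le 0$ on $\{y=y_{max}\}\cap\mathrm{Int}(\C_F)$. The symmetric computation using the $\dot y<0$ branch yields $\dot y\ge 0$ on $\{y=-y_{max}\}\cap\mathrm{Int}(\C_F)$. These are precisely the inward-pointing conditions needed at the two faces.

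Next I would package this into a Nagumo-type argument. Let $\tilde h_+(x):=y_{max}-y$ and $\tilde h_-(x):=y_{max}+y$; then $\{|y|\le y_{max}\}=\{\tilde h_+\ge 0\}\cap\{\tilde h_-\ge 0\}$ with $\tilde h_\pm$ continuously differentiable. The previous step verifies the Nagumo condition $L_f\tilde h_\pm+L_g\tilde h_\pm u\ge 0$ on the respective boundary, for every $x$ in $\mathrm{Int}(\C_F)$. Applying Nagumo's theorem exactly as in the proof of Proposition \ref{thm:GBF} to each of $\tilde h_+$ and $\tilde h_-$ along trajectories confined to $\mathrm{Int}(\C_F)$, and then intersecting the two resulting forward-invariant strips with the assumed invariance of $\mathrm{Int}(\C_F)$, yields forward invariance of $\C_{LK}$.

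The main obstacle I anticipate is the non-smoothness of $h_F$ on the set $\{\dot y=0\}$, where $\mathrm{sgn}(\dot y)$ is ambiguous and the formula in \eqref{eqn:CBFLK} is multi-valued. I would resolve this by adopting the conservative extension $h_F(x)=y_{max}-|y|-\tfrac{1}{2}\dot y^2/a_{max}$ on $\{\dot y=0\}$, which is the unique choice consistent with the ``maximum braking'' interpretation motivating \eqref{eqn:CBFLK} and which agrees with both one-sided expressions at $y=\pm y_{max}$. Under this convention the inward-pointing conclusions above hold uniformly on the two faces, and the Nagumo argument goes through without modification.
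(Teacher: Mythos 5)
Your proposal is correct and follows essentially the same route as the paper: the paper's proof also observes that on $(\partial \C_{LK})\cap \mathrm{Int}(\C_F)$ one has ${\rm sgn}(y)\,{\rm sgn}(\dot y)<0$ (i.e., the flow points inward on the faces $y=\pm y_{max}$), which is exactly your Nagumo-type condition read off from \eqref{eqn:CBFLK}. Your treatment is somewhat more detailed than the paper's one-line argument, in particular in handling the $\dot y=0$ ambiguity, but the underlying idea is identical.
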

\begin{proof}
	Figure~\ref{fig:SafeSetLK} shows the projection of $\C_F$ onto the $(y,\dot{y})$-plane, given by the strip between the upper and lower (blue) curves. The figure also shows the projection of the subset $\C_{LK}$, outlined by the dotted lines. Its forward invariance is immediate because $ \forall x\in (\partial \C_{LK}) \cap \mathrm{Int}(\C_F)$,
	${\rm sgn}(y) ~{\rm sgn}(\dot{y}) < 0.$
\end{proof}

\begin{figure}[!b]
	\centering
	\includegraphics[width=6cm]{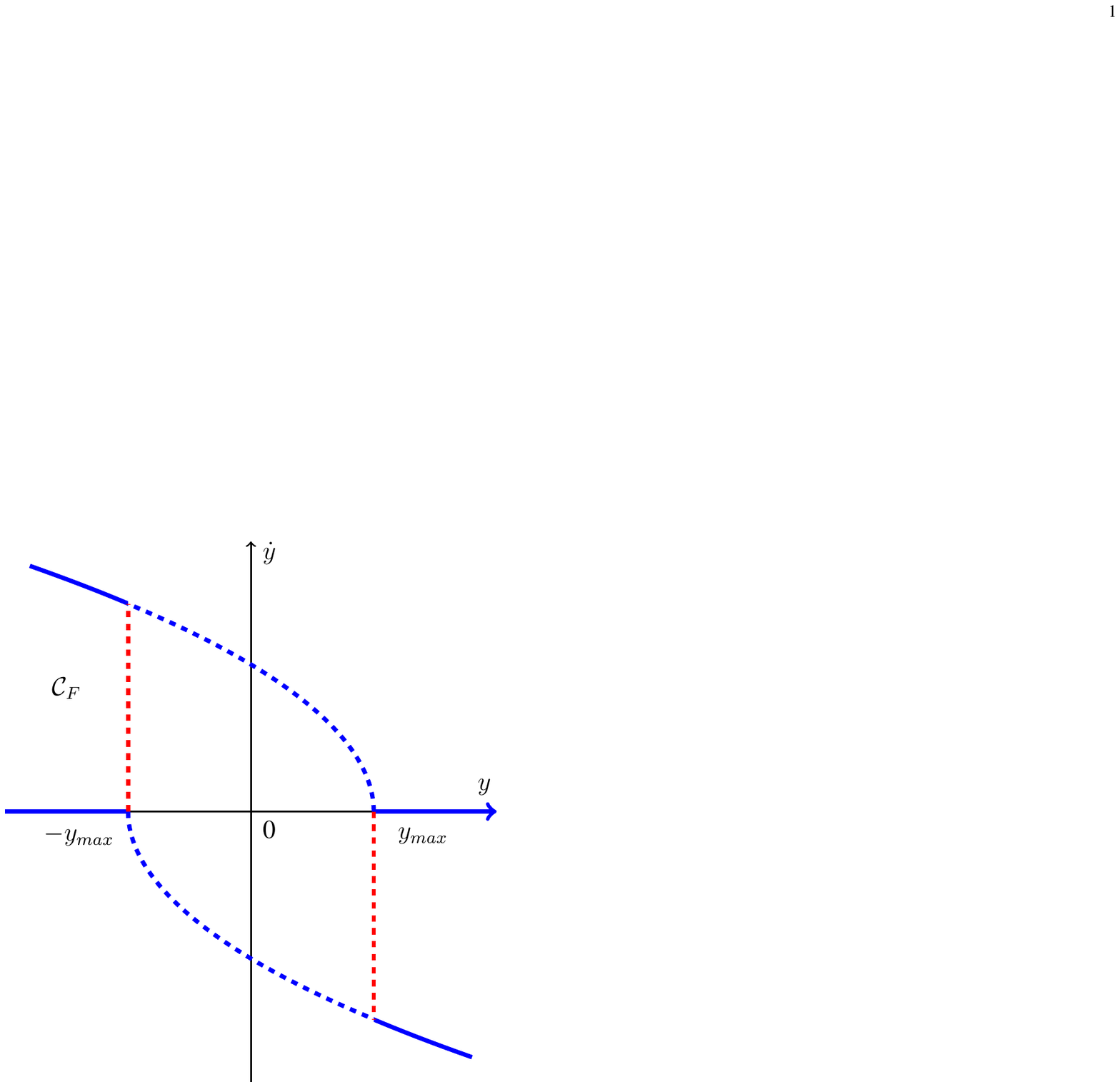}
	\caption{
		The projection of $\C_F$ onto the $(y,\dot{y})$-plane is bounded by the upper and lower curves. The subset $\C_{LK} \subset \mathrm{Int}(\C_F)$ is bounded by the dotted lines. Any feedback controller that renders $\mathrm{Int}(\C_F)$ forward invariant also renders $\C_{LK}$ forward invariant because, if $y=y_{max}$, then $\dot{y}<0$, and if $y=-y_{max}$, then $\dot{y}>0$.
	}
	\label{fig:SafeSetLK}
\end{figure}

%
%

\begin{remark}
	Another important fact is that a feedback controller rendering $\C_{LK}$ forward invariant with bounded lateral acceleration $\ddot{y}$ results in ultimate boundedness of the yaw angle and yaw rate. Indeed, solving \eqref{eqn:LateralForceBalance} for $u$ as a function of $\ddot{y}$ and using $\dot{y}=\nu + \psi v_0$, the four-state lateral-yaw model \eqref{eqn:LKfourstate} results in
	\begin{align}
		\left[  \begin{array}{c} \dot{\psi} \\ \dot{r} \end{array} \right]
		& =  \left[  \begin{array}{cc}
			0 & 1  \\
			-\frac{(a+b)C_r}{Iz}& -\frac{b(a+b) C_r}{ I_{z} v_0 }
		\end{array} \right]
		\left[  \begin{array}{c} \psi \\ r \end{array} \right]
		\nonumber \\
		& +\left[  \begin{array}{c} 0 \\ \frac{(a+b) C_r }{ I_{z} v_0 } \end{array} \right] \dot{y}
		+\left[  \begin{array}{c} 0 \\ a\frac{M}{I_{z}} \end{array} \right] \ddot{y}
		+  \left[  \begin{array}{r}  -1 \\ 0 \end{array} \right] r_{d}.\label{eqn:LKfourstate2}
	\end{align}
	This is a linear system in companion form, driven by $\dot{y}$, $\ddot{y}$ and $r_d$.  The model parameters $a$, $b$, $C_r$, $I_z$ and $v_0$ are all positive, and hence the system is exponentially stable, and therefore input-to-state stable \cite{KHALIL01}. The term $\dot{y}$ is bounded by virtue of belonging to $\C_{LK}$ and the term $\ddot{y}$ is bounded by \eqref{eqn:acce_constraint_LK}. Because the desired yaw rate $r_d$ is bounded for bounded road curvature, it therefore follows that $\psi$ and $r$ are bounded.
\end{remark}

\newsec{Feedback Control Law For Performance.}
To illustrate that a number of  feedback controllers can be combined with a CBF via a QP, a linear controller $u=-K(x-x_{ff})$
is assumed here, where $x_{ff}$ is a feedforward term,  with details given in the simulation section.  Alternatively, a quadratic Lypaunov function for the resulting closed-loop system could be used as a CLF for the open-loop system, and the feedback design completed as in Sect.~\ref{sec:ACCencodingConstraints}.

\subsubsection{CBF-based QP for LK}
Incorporating  \eqref{eqn:comfort_constraint_LK} and \eqref{eqn:acce_constraint_LK}, we have the QP for lane keeping:
\begin{align}
	\label{eqn:LKQP}
	\tag{LK QP}
	\ubar^*(x) &= \underset{\ubar = \left[u , \delta\right]^\top \in  U_{lk}\times\R}{\operatorname{argmin}}   \frac{1}{2}
	\ubar^\top H_{\mathrm{lk}} \ubar + F_{\mathrm{lk}}^\top \ubar \nonumber\\
	\mathrm{s.t.}
	&\quad  A_{\mathrm{fcbf}}^{lk}  \ubar \leq b_{\mathrm{fcbf}}^{lk}, \nonumber\\
	& \quad  A_{\mathrm{fc}}^{lk}  \ubar \leq b_{\mathrm{fc}}^{lk}, \nonumber\\
	&\quad u=-K(x-x_{ff})+\delta, \nonumber
\end{align}
where $\delta$ is a relaxation parameter, indicating the ``soft'' nature of this constraint, and $A_{\mathrm{fcbf}}^{lk}$, $b_{\mathrm{fcbf}}^{lk}$, $A_{\mathrm{fc}}^{lk}$, $b_{\mathrm{fc}}^{lk}$ are derived in a similar manner to the corresponding terms in Sect. \ref{sec:QPACC}.


\section{Simulation Results}
\label{sec:simulation}

Simulation results obtained by applying the QP-mediated controllers for ACC and LK
are shown in this section.  The parameters used for the simulation are given in Table \ref{tab:para}.

\subsection{Simulation results for ACC}
Various problem formulations are compared here. In all cases, the system \eqref{eqn:fgdynamics} is started from the initial condition $x(0)=(18,10,150)$.

\subsection{Comparison of two QPs}
Recall that Figure \ref{fig:nobound} showed simulation results obtained by applying the QP controller in \eqref{eqn:accQP}, where the force constraints were not taken into account. Figures \ref{fig:all_constraints} and \ref{fig:comcbf} show simulation results for \eqref{eqn:accQP2}, where the force constraints are included.

Specifically, Figure \ref{fig:all_constraints} compares \eqref{eqn:accQP} with \eqref{eqn:accQP2} using the optimal RCBF $B_F^o$ and the conservative RCBF $B_F^c$. Note that, due to limits on the wheel forces, the speed converges to $v_d$ more slowly, and begins braking earlier, as evidenced by the top plot in Fig. \ref{fig:all_constraints}.  Since RCBF $B_F^o$ is less conservative than $B_F^c$, the car maintains a smaller following distance, but the specified time-headway constraint
is always satisfied, as indicated by the bottom plot in Fig. \ref{fig:all_constraints}. Moreover, when using a force-based RCBF \eqref{eqn:forceRCBF}, the force constraints are satisfied for all time, as shown in Fig. \ref{fig:comcbf}.  Ultimately, the QP based controller \eqref{eqn:accQP2} is able to satisfy all of the control objectives and constraints for the ACC problem outlined in Sect. \ref{sec:modifiedQP} through a unified control methodology.

%


\begin{remark} The conference paper \cite{Aakar2015experiment} implements the above QP-based controllers in a real-time embedded processor on scaled cars. A video of the results is available on YouTube \cite{YoutubeScaledCarMovies}.
\end{remark}


\subsection{Comparison of RCBFs and ZCBFs}\label{sec:comcbfs}
We also consider the ZCBFs for our ACC problem, which are associated with functions $h_F^o$ and $h_F^c$ given in the appendix. As expected, when using the controller from the QP \eqref{eqn:accQP2} with ZCBFs, all constraints are satisfied just as with the RCBFs. Figure \ref{fig:comcbf} shows a comparison of the generated vehicle acceleration using both types of CBFs, for both optimal and conservative cases. Our limited experience is that the ZCBFs generate a smoother input trajectory (see Fig. \ref{fig:comcbf}), while satisfying the force constraints. We suspect that this is due to the local Lipschitz constant of a RCBF potentially becoming arbitrarily large near the boundary of the safe set.



\begin{figure}[!hbt]
	\centering
	\includegraphics[width=.35\textwidth]{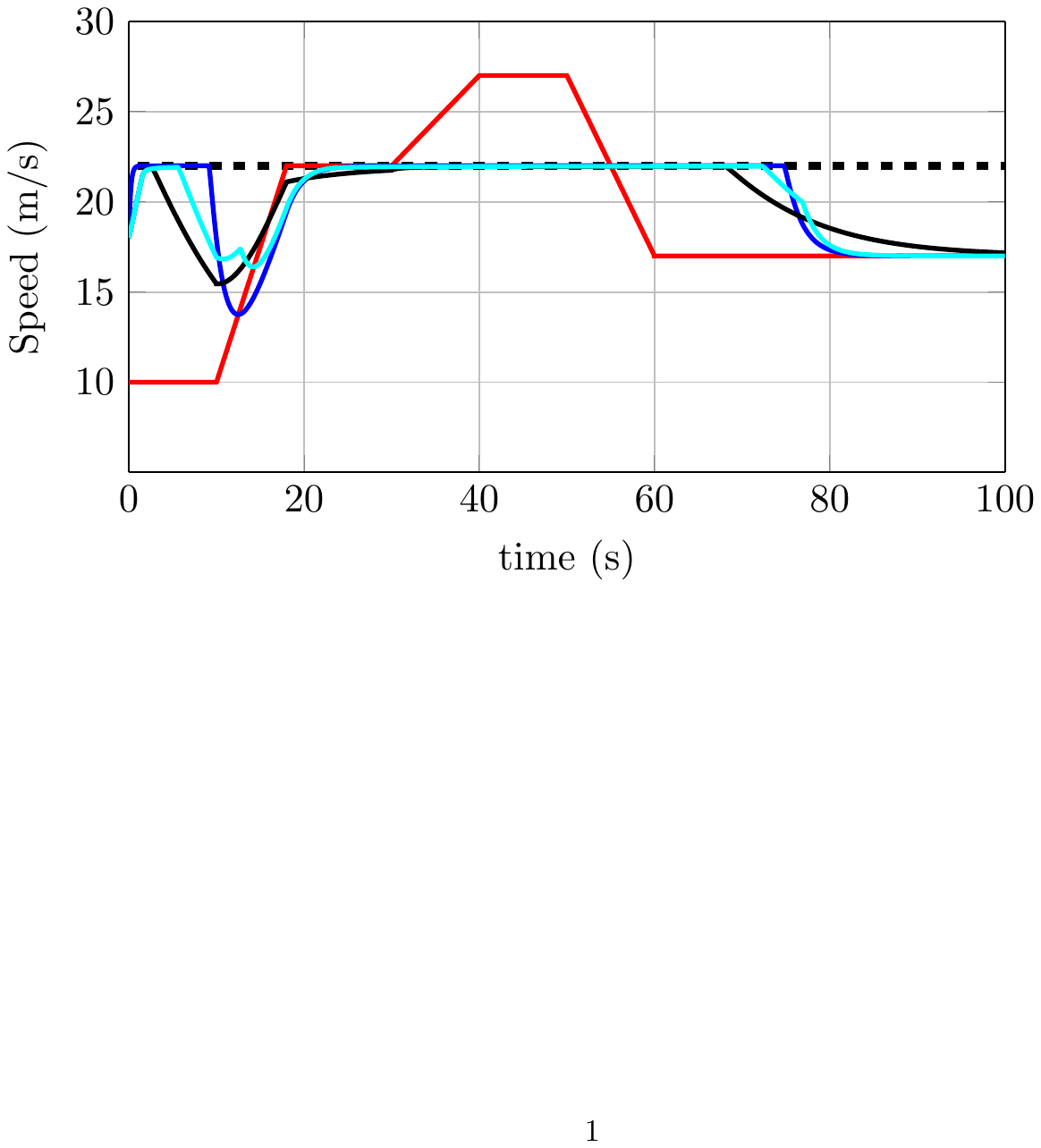}\\
	\includegraphics[width=.35\textwidth]{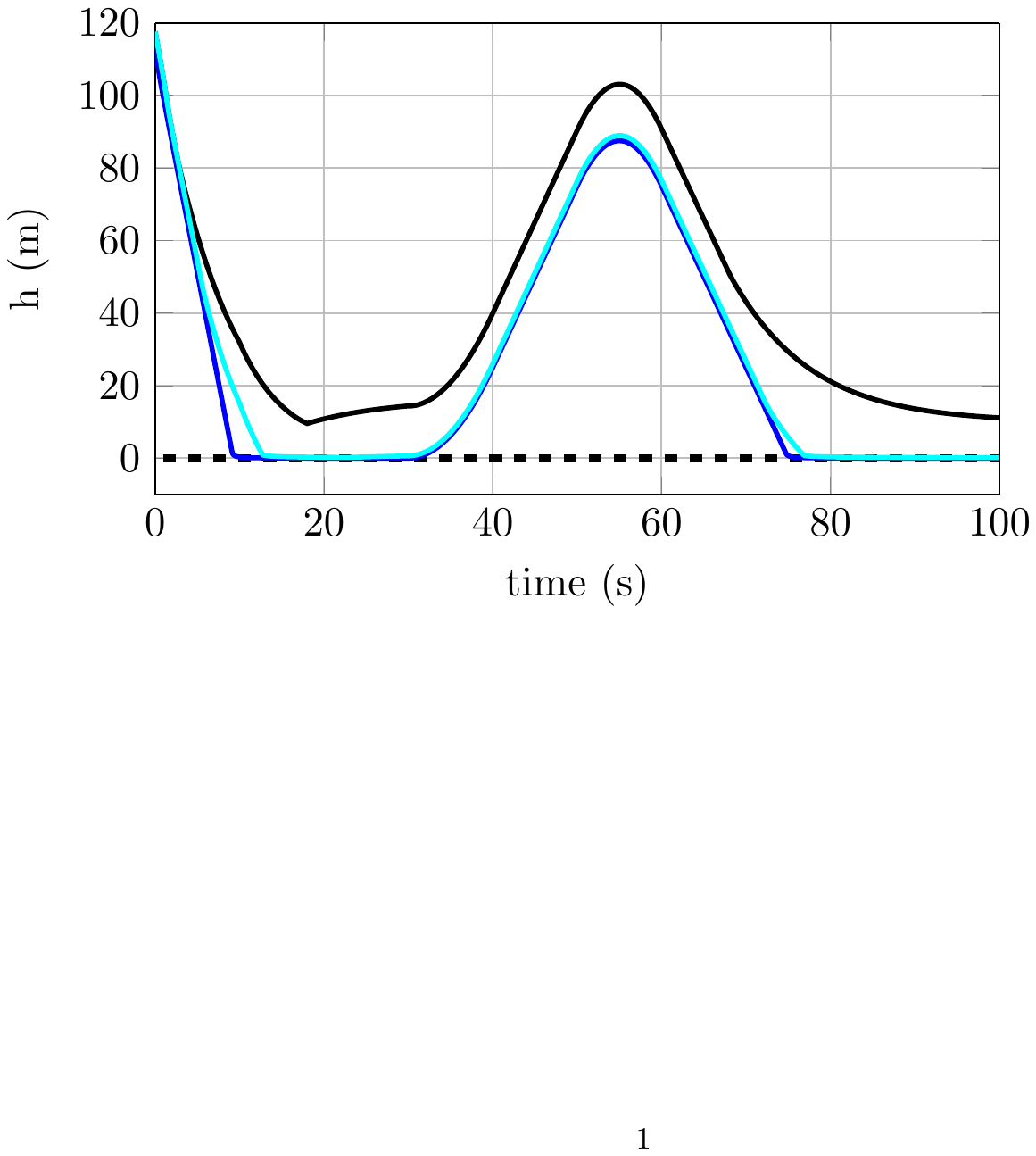}\\
	\includegraphics[width=.4\textwidth]{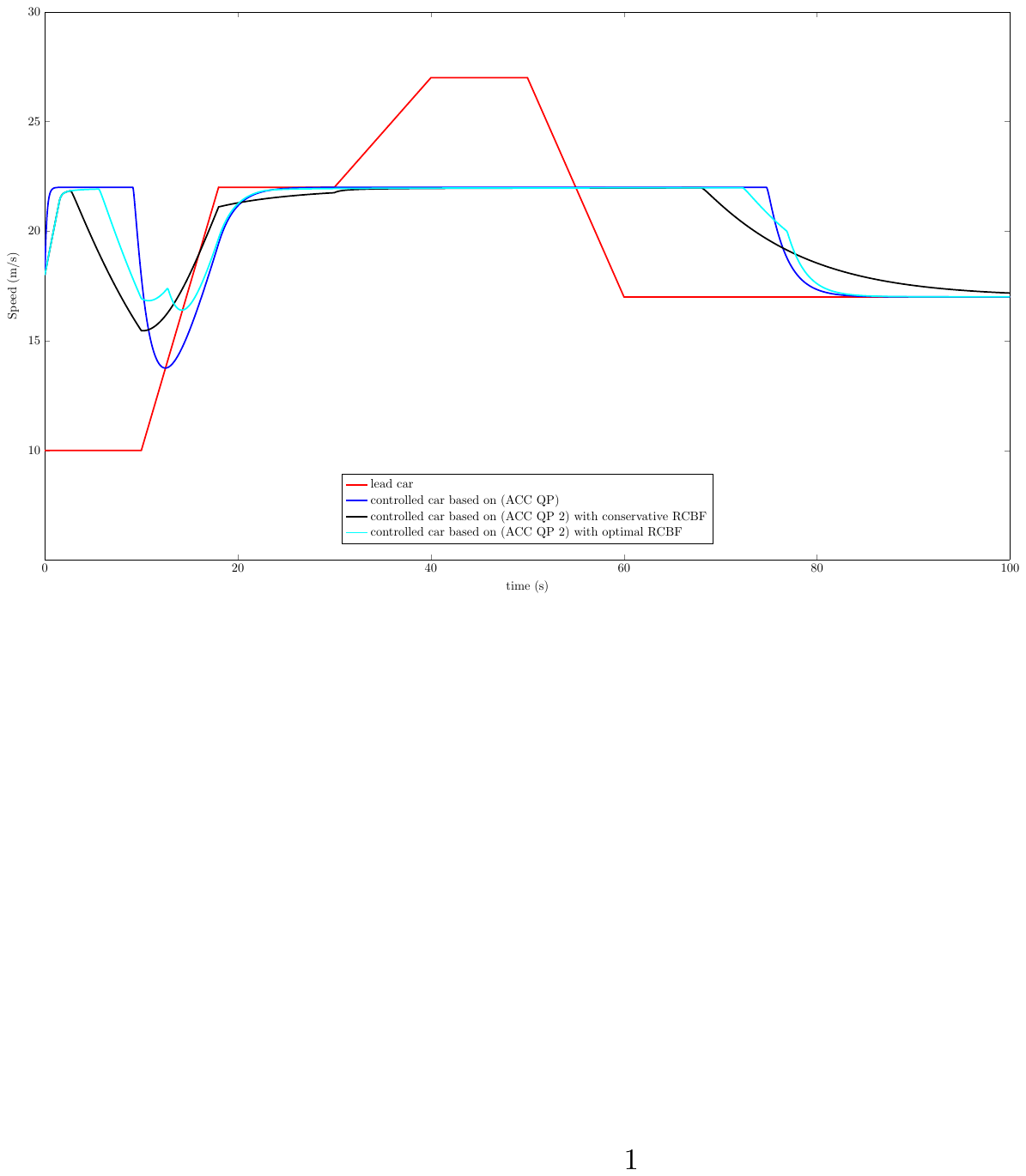}\\
	\caption{Comparison of QP \eqref{eqn:accQP} with QP \eqref{eqn:accQP2}.   (top) speed of the lead car and the controlled car based on  QP \eqref{eqn:accQP} and \eqref{eqn:accQP2} (bottom)  hard constraint  \eqref{eqn:hard_constraint} based on  QP \eqref{eqn:accQP} and \eqref{eqn:accQP2} where positive values indicate satisfaction.}\label{fig:all_constraints}
\end{figure}

\begin{figure}[!hbt]
	\centering
	\includegraphics[width=.35\textwidth]{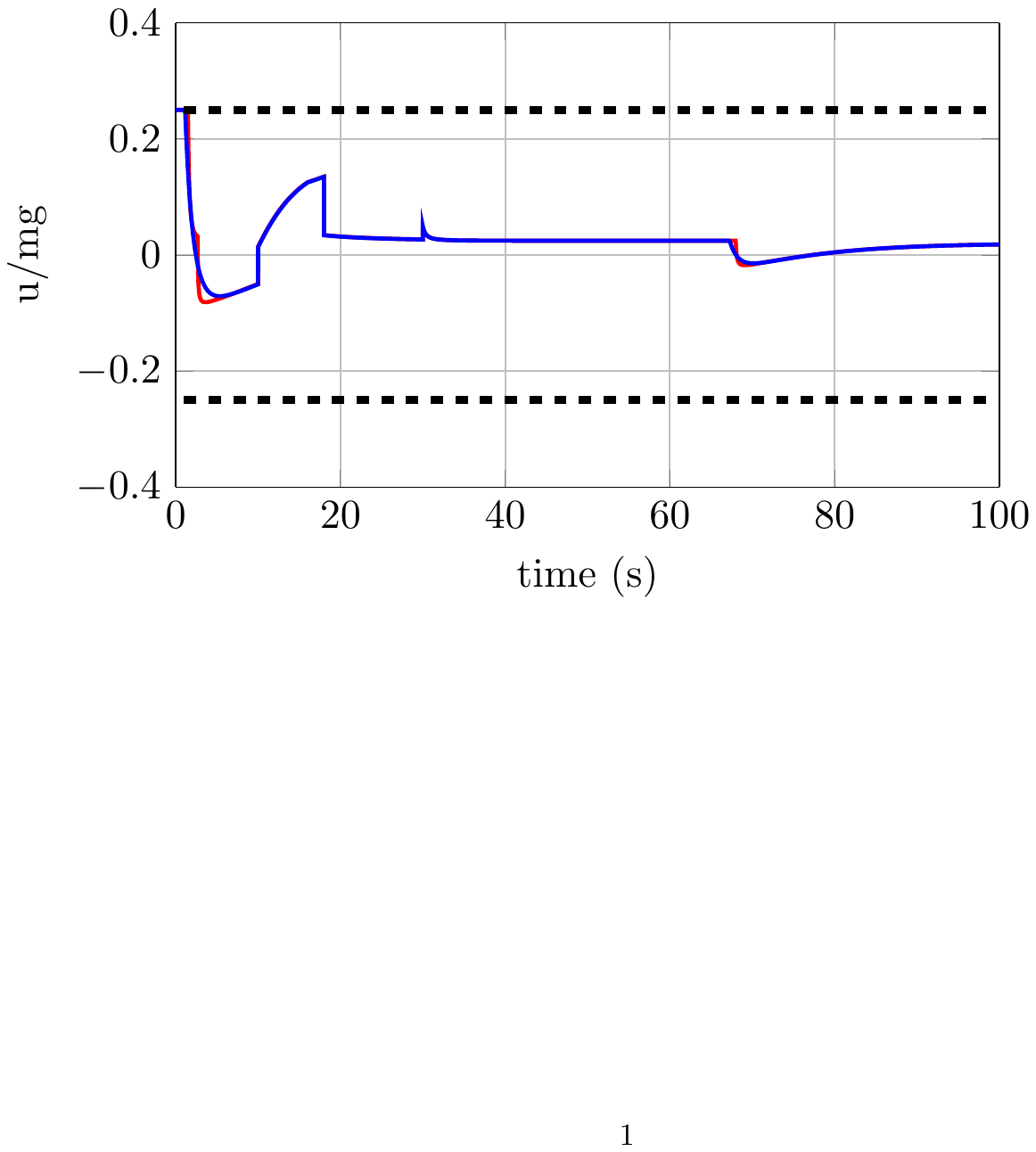}\\
	\includegraphics[width=.35\textwidth]{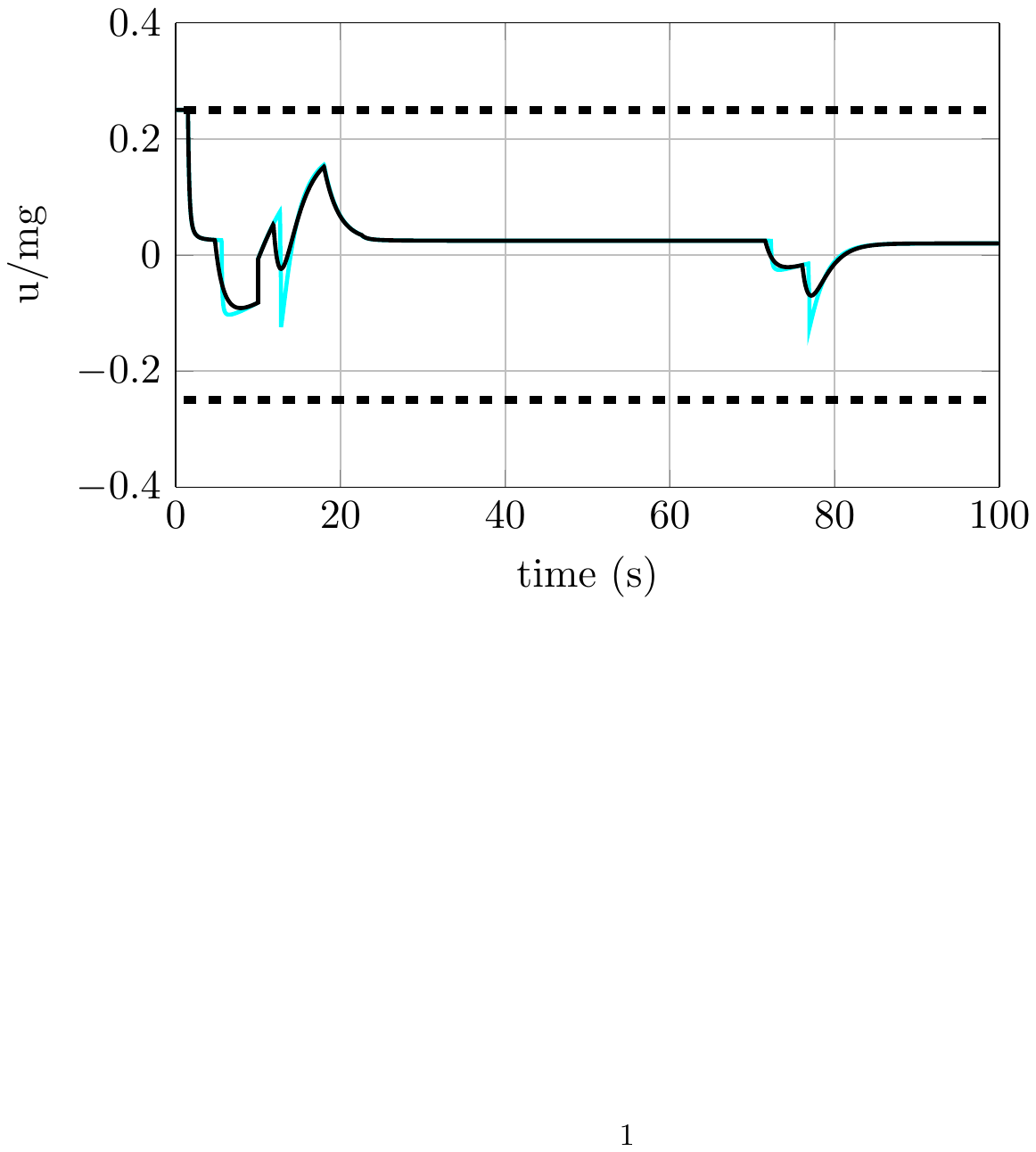}\\
	\includegraphics[width=.15\textwidth]{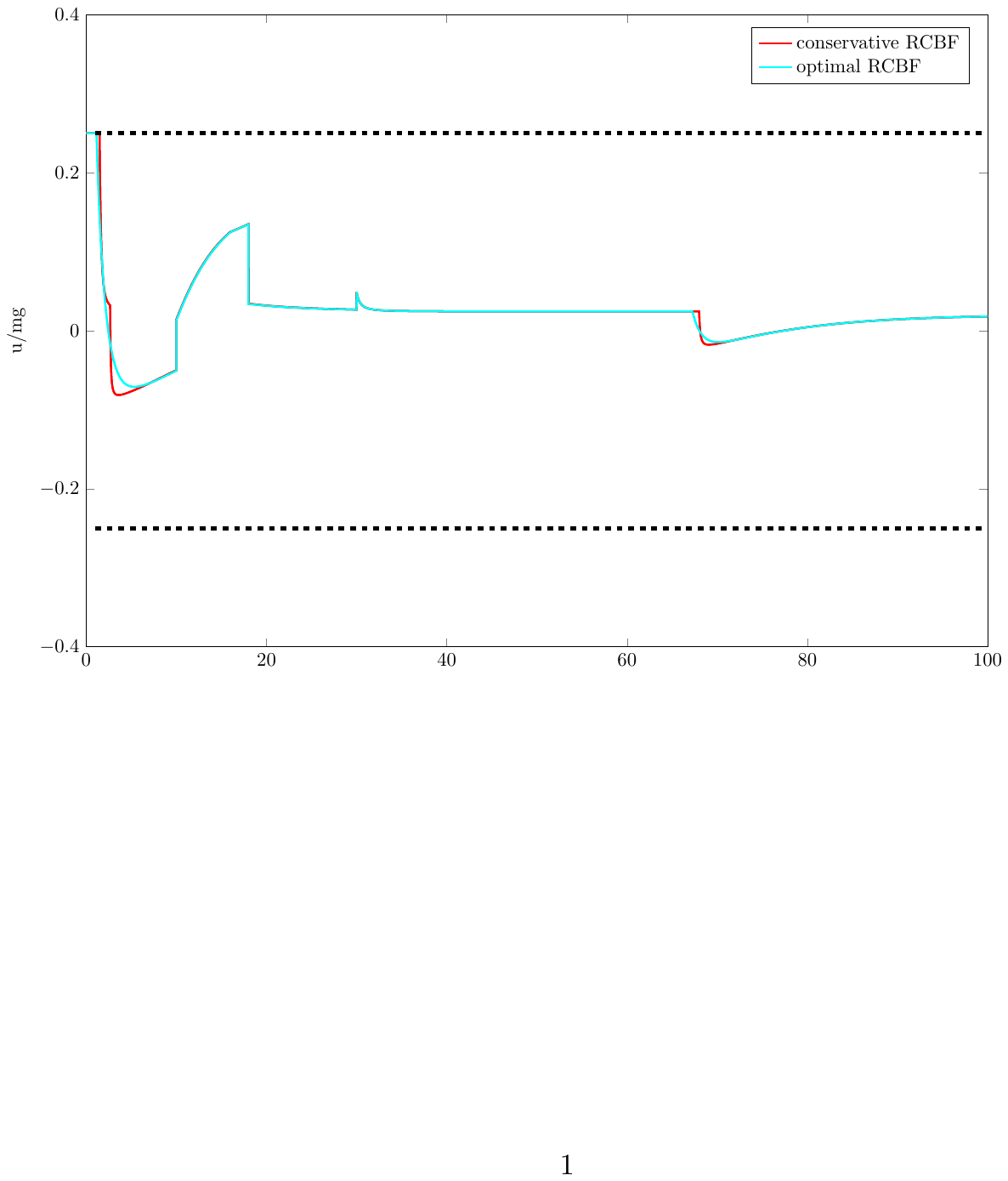}
	\includegraphics[width=.15\textwidth]{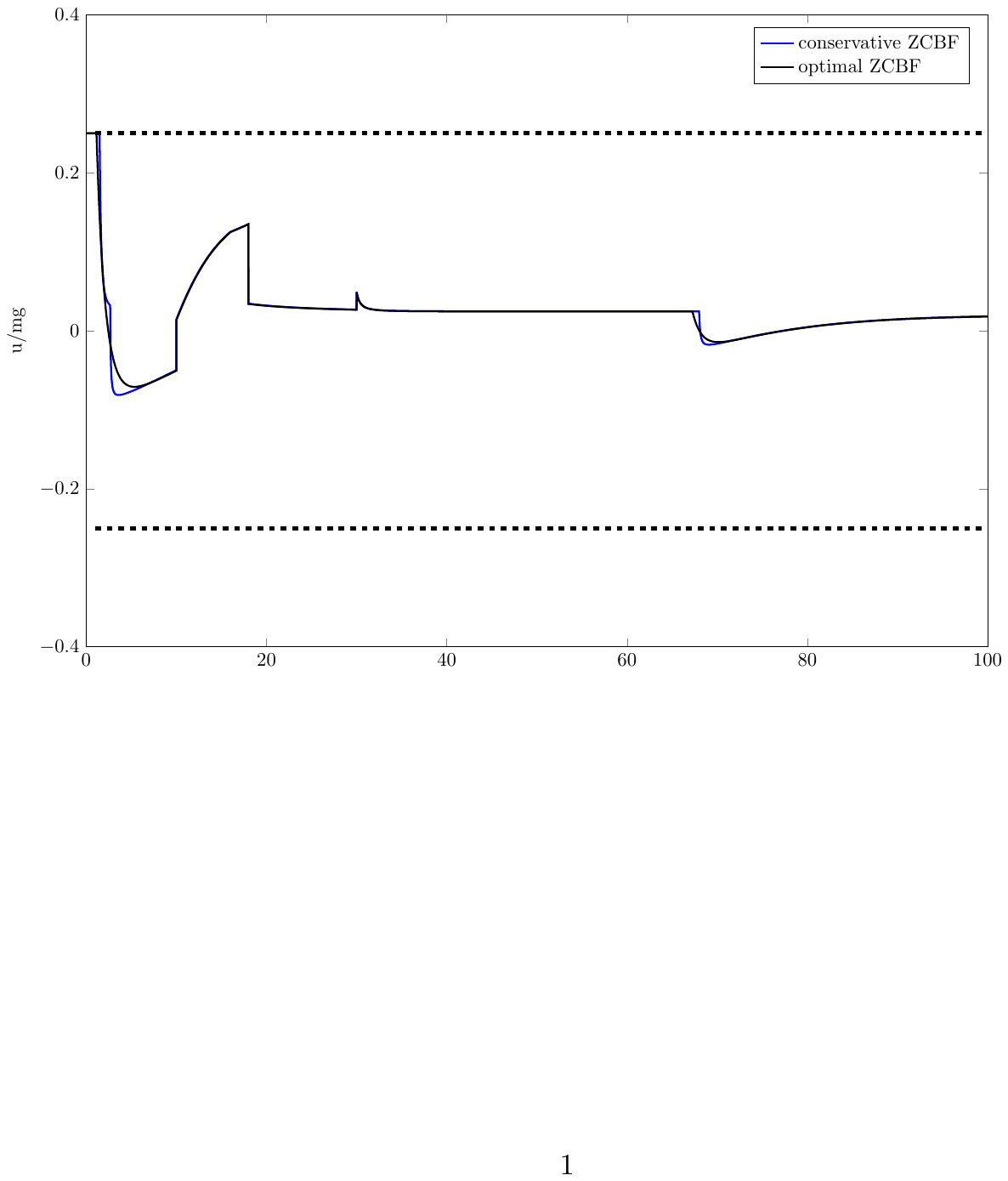}\\
	\caption{Comparison of the input force generated from QP \eqref{eqn:accQP2} using ZCBFs and RCBFs. (top) conservative CBFs (bottom) optimal CBFs}\label{fig:comcbf}
\end{figure}

\begin{figure*}[!hbt]
	\centering
	\includegraphics[width=\textwidth]{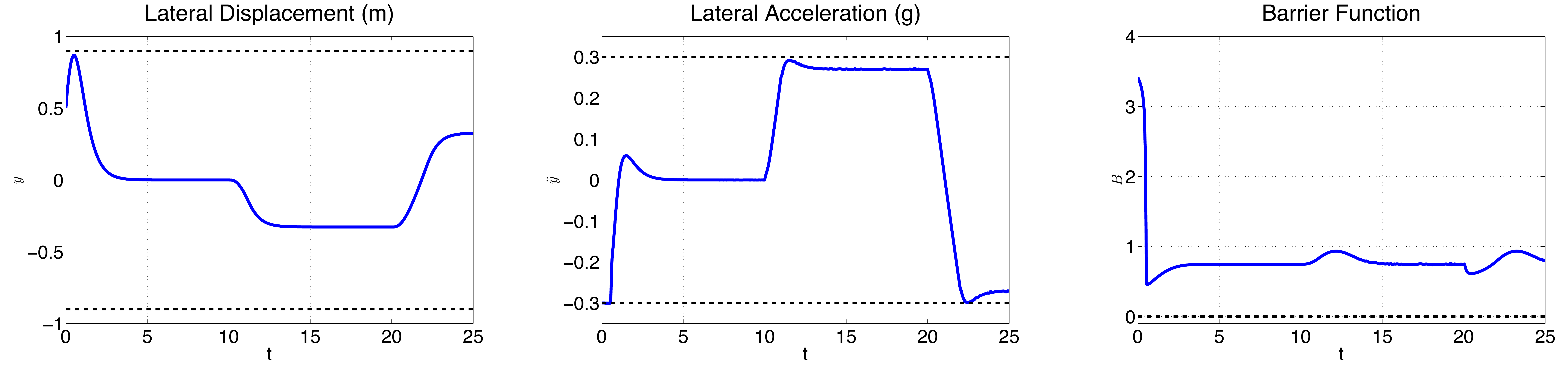}\\
	\caption{Simulation results of the QP-based controller for LK problem. (left) lateral displacement with $y^{max}=0.9 m$ (middle) lateral acceleration with $a_{max}=0.3g$ (right) barrier function where positive values indicate satisfaction.}\label{fig:LK}
\end{figure*}

\begin{table}[tb]
	\caption{Parameter values used in simulation}\label{tab:para}
	\begin{center}
		\begin{tabular}{|c|c|c|c|c|c|}
			\hline
			$M$  & 1650 $kg$ &$f_1$ & 5 $N  s/m$  &  $p_{sc}$ &  $10^2$ 	\\
			$f_0$ & 0.1 $N$ &  $f_2$ & 0.25 $N s^2/m^2$  &  $a_f'$ & 0.25 \\
			$a$ & 1.11	$m$ & $C_{f}$ & $133000$ $N/rad$ & $a_f$ & 0.25\\
			$b$ & 1.59 $m$ & $C_{r}$ & $98800$ $N/rad$ &$c$ &  $10$ \\
			$v_0$ & 27.7 $m/s$ & $I_z$ & 2315.3 $m^2\cdot kg$ &$\gamma$& $1$ \\
			$v_d$    & 22 $m/s$ & 	$g$  & 9.81 $m/s^2$  & &\\
			$y_{max}$ & 0.9 $m$ & $a_{max}$ & 0.3$\times$9.81 $m/s^2$ &&\\
			\hline
		\end{tabular}
	\end{center}
\end{table}

\subsection{LK simulation}
The feedback gain $K$ was determined by solving an LQR problem with control weight $R=600$ and state weight given by
$Q= K_p C^\top C + K_d C^\top A^\top A C,$
where $C=[1, 0, 20, 0], ~K_p=5, \text{and}~~K_d=0.4.$
The ``output'' $y=Cx$ corresponds to a lateral preview of approximately $0.7$ seconds. The feedforward term
$x_{ff} = [0, 0, 0, r_d]^\top$
reduces tracking error.

Simulation results for lane keeping are shown in Fig.\ref{fig:LK}. The road is assumed to be curved and the longitudinal speed of the vehicle is a constant. We can see that the absolute value of the lateral displacement is always bounded by 0.9m, and the lateral acceleration is always bounded by 0.3g. Therefore, the displacement and acceleration constraints are both satisfied.



\section{Conclusions}
\label{sec:conclusions}

This paper presented a novel framework for the control of safety-critical systems through the unification of safety conditions (expressed as control barrier functions) with control objectives (expressed as control Lyapunov functions).  At the core of this methodology was the introduction of two new classes of barrier functions: reciprocal and zeroing.  The interplay between these classes of functions was characterized, and it was shown that they provide necessary and sufficient conditions on the forward invariance of a set $\C$ under reasonable assumptions.  Therefore, in the context of (affine) control systems, this naturally yields control barrier functions (CBFs) with a large set of available control inputs that yield forward invariance of a set $\C$.  Importantly, CBFs are expressed as affine inequality constraints in the control input that--when satisfied pointwise in the candidate safe set--imply forward invariance of the set, and hence safety.
Utilizing control Lyapunov function (CLFs) to represent control objectives--which again result in affine inequality constraints in the control input--safety constraints and performance objectives were naturally unified in the framework of quadratic program (QP) based controllers. Furthermore, continuity of the resulting controllers was formally established by strictly enforcing the safety constraint and relaxing the control objective.
The mediation of safety and performance was illustrated through the application to automotive systems in the context of adaptive cruise control (ACC) and lane keeping (LK).

Future work will be devoted to building upon the foundations presented in this paper in the context of safety-critical control of cyber-physical systems, with a special focus on robotic and automotive systems.  At a formal level, this paper developed ``force-based'' barrier functions for the specific problems considered (ACC and LK), leaving as an \textit{open problem} how to do characterize and compute such functions for general classes of control systems.  In addition, formulating how to unify safety constraints, e.g., combining ACC and LK constraints into a single framework, has the potential to suggest methods for composing safety specifications.
Going beyond automotive systems, the presented methodologies are naturally applicable to robotic systems, e.g., in the context of self-collision avoidance, obstacle avoidance, end-effector (and foot) placement, and a myriad of other safety constraints.  Exploring these applications promises to provide a formal framework for safety-critical operation of robotic systems.

\section*{Acknowledgements}
The authors are indebted to H. Peng (University of Michigan) for guidance on the models and control specifications for adaptive cruise control and lane keeping. We also greatly benefited from feedback by T. Pilutti, M. Jankovic, and W. Milam (Ford Motor Company) as well as K. Butts and D. Caveney (Toyota Technical Center) during the course of this work. The work of Aaron Ames was performed while he was with the Woodruff School of Mechanical Engineering and the School of Electrical and Computer Engineering at the Georgia Institute of Technology.

\appendix

In this appendix, we develop two control barrier functions $h^c_F$ and $h^o_F$ for the safe set $\mathcal{C}$ in Section \ref{sec:ACCencodingConstraints}.


Recall the dynamics of the ACC model \eqref{eqn:fgdynamics}.
If we drop the aerodynamic drag term $\Fr$, then a RCBF $B_F$ meeting the comfort  constraint \eqref{eqn:comfort_constraint}
for the simplified dynamic is also a RCBF for the original model \eqref{eqn:fgdynamics}, because  the drag terms effectively augment the action of braking. In what follows, we develop the RCBFs for the ACC problem with input constraints using the simplified model, and the \emph{optimality} is in the sense of the simplified model.

For simplicity, suppose that the present time is $t_0=0$  and denote $v_f(0),v_l(0)$ and $D(0)$ by $v_f,v_l$, and $D$ respectively. If the lead car brakes using its maximal deceleration force, then the best response of the controlled car is to brake using its maximal deceleration force. Let $T_l$ and $T_f$ denote the time in seconds for the two cars to come to a complete stop when using maximum braking force, respectively. Supposing there are no reaction delays, it follows that
$T_l=\frac{v_l}{a_lg},T_f=\frac{v_f}{a_fg}$.

{\newsec{Optimal CBF for ACC.}  The \emph{optimal RCBF} takes the form of $B_F^o=\frac{1}{h_F^o}$ or $B_F^o=-log(\frac{h_F^o}{1+h_F^o})$ with 
	$$
	h^o_F(x)=D-\Delta^{o*}
	$$ 
	where $\Delta^{o*}$ is given as follows:\\
	(i) If $T_f>T_l$ (or equivalently $v_f>a_lv_l/a_f$), which implies the lead car can stop first, then
	$$
	\Delta^{o*}=\max_{t\in[0,T_f]}(\Delta_1(t)+\tau_{d}(v_f-a_fgt)).
	$$
	with
	\begin{align*}
	& \Delta_1(t)=\begin{cases}
	(v_ft-\frac{1}{2}a_fgt^2)-(v_lt-\frac{1}{2}a_lgt^2), t\in[0,T_l),\\
	(v_ft-\frac{1}{2}a_fgt^2)-\frac{v_l^2}{2a_lg}, t\in[T_l,T_f].
	\end{cases}
	\end{align*}
	
	(ii) If $T_f\leq T_l$ (or equivalently $v_f\leq a_lv_l/a_f$), which implies the following car can stop first, then
	$$
	\Delta^{o*}=\max_{t\in[0,T_f]}(\Delta_2(t)+\tau_{d}(v_f-a_fgt)),
	$$
	with
	\begin{align*}
	& \Delta_2(t)=(v_ft-\frac{1}{2}a_fgt^2)-(v_lt-\frac{1}{2}a_lgt^2),\;t\in[0,T_f].
	\end{align*}
	
	For both cases, $\Delta_1(t)$ or $\Delta_2(t)$ is the \textit{decrease in relative distance} at time $t$ between the two cars, and $\Delta^{o*}$ is the maximum decrease in distance headway.

	The explicit form of $\Delta^{o*}$ can be derived by solving the optimization problem above. The results are given as follows.
	
	Define
	\begin{align*}
	\Delta^{o*}_1&=1.8v_f,\\
	\Delta^{o*}_2&=\frac{1}{2}\frac{(1.8a_fg-v_f)^2}{a_fg}+1.8v_f-\frac{v_l^2}{2a_lg},\\
	\Delta^{o*}_3&=\frac{1}{2}\frac{(v_l+1.8a_fg-v_f)^2}{(a_f-a_l)g}+1.8v_f.
	\end{align*}
	
	
	When $a_f=a_l$,
	
	
	$$
	\Delta^{o*}=\begin{cases}
	\Delta^{o*}_1,\quad \mbox{if}\;0<v_f< v_l+1.8a_fg;\\
	\Delta^{o*}_2,\quad\mbox{otherwise}.
	\end{cases}
	$$
	

	%
	%
	%
	%
	
	When $a_f>a_l$,
	$$
	\Delta^{o*}=\begin{cases}
	\Delta^{o*}_1,\quad \mbox{if}\;0<v_f< v_l+1.8a_fg;\\
	\Delta^{o*}_2,\quad \mbox{if}\;v_f\geq\frac{a_f}{a_l}v_l+1.8a_fg;\\
	\Delta^{o*}_3,\quad \mbox{otherwise}.
	\end{cases}
	$$
	
	%
	%
	%
	%
	%
	%
	%
	%
	%
	
	
	When $a_f<a_l$,
	$$
	\Delta^{o*}=\begin{cases}
	\Delta^{o*}_1,\quad \mbox{if}\;0<v_f< \sqrt{a_f/a_l}v_l +1.8a_fg ;\\
	\Delta^{o*}_2,\quad \mbox{otherwise}.
	\end{cases}
	$$

	{\newsec{Convervative  CBF for ACC.} 
		A simpler, albeit conservative, estimate of the safe set is given in this section.
		Replace the term $\tau_{d}(v_f-a_fgt)$ by $\tau_{d}v_f$ in $\Delta^{o*}$ and denote the new formula by $\Delta^{c*}$. Then we define 
		$$
		h^c_F(x)=D-\Delta^{c*},
		$$  
		and call the associated RCBF candidate $B_F^c$ as in $B_F^c:=1/h_F^c$  or $B_F^c=\log(\frac{h_F^c}{1+h_F^c})$ the \emph{conservative RCBF}. Clearly, $\Delta^{c*}$ is larger than $\Delta^{o*}$ and therefore $h_F^c$  corresponds to a more conservative safety set than $h_F^o$. 
		However, the closed-form expressions for $\Delta^{c*}$ are much simpler than $\Delta^{o*}$, which is given as follows.
		
		$$
		\Delta^{c*}=\begin{cases}
		\tau_{d}v_f,\quad \mbox{if}\;v_l\geq v_f,T_l\geq T_f;\\
		\tau_{d}v_f + \frac{1}{2}\frac{(a_lv_f-a_fv_l)^2}{a_la_f(a_l-a_f)g},\quad \mbox{if}\;v_l\geq v_f,T_l< T_f;\\
		\tau_{d}v_f + \frac{1}{2}\frac{(v_f-v_l)^2}{(a_f-a_l)g},\quad \mbox{if}\;v_l<v_f,T_l\geq T_f;\\
		\tau_{d}v_f + \frac{1}{2}\frac{v_f^2a_l-v_l^2a_f}{a_fa_lg},\quad \mbox{if}\;v_l<v_f,T_l< T_f.
		\end{cases}
		$$
		
		%
		%
		%
		%
		%
		%
		
		It is easy to check that $h_F^o$ and $h_F^c$ above are valid RCBFs. For example, we prove that $B_F^c:=1/h_F^c$ satisfies
		\begin{align*}
		\inf_{u \in U}  \left[ L_f B_F^c + L_g B_F^c u - \frac{1}{ B_F^c}  \right] \leq 0.
		\end{align*}
		
		When $v_l\geq v_f,T_l\geq T_f$, taking $u=-a_fMg$ results in
		$$
		L_f B_F^c + L_g B_F^c u=-\frac{\tau_{d}a_fg+(v_l-v_f)}{(h_F^c)^2}<0.
		$$
		
		When $v_l\geq v_f,T_l< T_f$, taking $u=-a_fMg$ results in
		\begin{align*}
		&L_f B_F^c + L_g B_F^c u\\
		=&-\frac{\tau_{d}a_fg+(v_l-v_f)+\frac{(a_lv_f-a_fv_l)(a_la_fg+a_fa_L)}{a_la_f(a_l-a_f)g}}{(h_F^c)^2}<0,
		\end{align*}
		because $a_l>a_f$ and $a_lv_f>a_fv_l$ in this case.
		
		When $v_l<v_f,T_l\geq T_f$, taking $u=-a_fMg$ results in
		$$
		L_f B_F^c + L_g B_F^c u=-\frac{\tau_{d}a_fg+\frac{(v_f-v_l)(a_lg+a_L)}{(a_f-a_l)g}}{(h_F^c)^2}<0,
		$$
		because $v_f>v_l$ and $a_f>a_l$ in this case.
		
		When $v_l<v_f,T_l<T_f$, taking $u=-a_fMg$ results in
		$$
		L_f B_F^c + L_g B_F^c u=-\frac{\tau_{d}a_fg+v_l+v_la_L/a_lg}{(h_F^c)^2}<0.
		$$

Clearly, the corresponding optimal and conservative ZCBFs are just $h_F^o$ and $h_F^c$ shown above, respectively. 


\bibliographystyle{IEEEtran}
\bibliography{./AXGT_TAC_CBF_ref}

\end{document}